\newtheorem{definition}{Definition}[section]
\newtheorem{theorem}[definition]{Theorem}
\newtheorem{lemma}[definition]{Lemma}
\newtheorem{corollary}[definition]{Corollary}
\newtheorem{example}[definition]{Example}
\newtheorem{proposition}[definition]{Proposition}
\begin{document}
\title{\bf 
Using Catalan words and a $q$-shuffle 
algebra \\  
to describe a PBW basis for
the positive \\
part of 
$U_q({\widehat {\mathfrak{sl}}}_2)$
}
\author{
Paul Terwilliger 
}
\date{}

\maketitle
\begin{abstract}
The positive part $U^+_q$
of $U_q({\widehat {\mathfrak{sl}}}_2)$
has a presentation with
two generators $A,B$ that satisfy
the cubic $q$-Serre relations.
In 1993 I. Damiani obtained a PBW basis for 
 $U^+_q$,
 consisting of
some elements $\lbrace E_{n\delta+\alpha_0}\rbrace_{n=0}^\infty$,
$\lbrace E_{n\delta+\alpha_1}\rbrace_{n=0}^\infty$,
$\lbrace E_{n\delta}\rbrace_{n=1}^\infty$ that are
 defined recursively.
Our goal is to describe these elements in closed form.
To reach our goal,
start with the free associative algebra $\mathbb V$ on two generators $x,y$.
The standard (linear) basis for 
$\mathbb V$ consists of the words in $x,y$.
In 1995 M. Rosso introduced 
an associative algebra structure on $\mathbb V$, called a
$q$-shuffle algebra.
For $u,v\in \lbrace x,y\rbrace$ their 
$q$-shuffle product is
$u\star v = uv+q^{\langle u,v\rangle }vu$, where
$\langle u,v\rangle =2$ 
(resp. $\langle u,v\rangle =-2$) 
if $u=v$ (resp. 
 $u\not=v$).
Rosso gave an injective algebra homomorphism
$\natural$ from $U^+_q$ into the $q$-shuffle algebra
${\mathbb V}$, that sends $A\mapsto x$ and $B\mapsto y$.
We apply $\natural $ to the above PBW basis,
and express the image in the standard basis for $\mathbb V$.
This image involves words of the following type.
Define $\overline x = 1$
and $\overline y = -1$.
A word $a_1a_2\cdots a_n$
is {\it Catalan} whenever
$\overline a_1+
\overline a_2+\cdots  + 
\overline a_i$ is nonnegative for 
$1 \leq i \leq n-1$ and zero for $i=n$.
In this case $n$ is even.
For $n\geq 0$ define
\begin{align*}
C_n =
  \sum a_1a_2\cdots a_{2n}
\lbrack 1\rbrack_q
\lbrack 1+\overline a_1\rbrack_q
\lbrack 1+\overline a_1+\overline a_2\rbrack_q
\cdots
\lbrack 1+\overline a_1+\overline a_2+ \cdots +\overline a_{2n}\rbrack_q,
\end{align*}
where the sum is over all the Catalan words $a_1 a_2 \cdots a_{2n}$
in $\mathbb V$ that have length $2n$.
We show that
$\natural$ sends
$E_{n\delta+\alpha_0} \mapsto  q^{-2n}(q-q^{-1})^{2n} xC_n$
and $E_{n\delta+\alpha_1} \mapsto
 q^{-2n}(q-q^{-1})^{2n} C_ny$
 for $n\geq 0$, and
 $E_{n\delta} \mapsto  -q^{-2n}(q-q^{-1})^{2n-1} C_n$
 for $n\geq 1$.
It follows from this and earlier results of  Damiani
that $\lbrace C_n\rbrace_{n=1}^\infty$ mutually
 commute in the $q$-shuffle algebra $\mathbb V$.

\bigskip
\noindent
{\bf Keywords}. Catalan,
PBW basis,
quantum shuffle algebra.
\hfil\break
\noindent {\bf 2010 Mathematics Subject Classification}. 
Primary: 17B37. Secondary  05E15.

 \end{abstract}

\section{Introduction}
The topic of Catalan sequences 
 appears in many textbooks on Combinatorics
\cite{bona},
\cite{bruIntro},
\cite{stanleyvol2},
\cite{vanlint}.
The textbook
\cite{stanley}
is devoted to this topic and its myriad connections 
to other branches of mathematics.
In the present paper we will encounter Catalan sequences in the theory
of quantum groups and $q$-shuffle algebras.
Before going into detail, we take a moment to establish some notation.
Recall the natural numbers
$\mathbb N = \lbrace 0,1,2,\ldots\rbrace$
and integers $\mathbb Z = \lbrace 0, \pm 1, \pm 2,\ldots \rbrace$.
Let $\mathbb F$ denote a field.
All vector spaces discussed in this paper are over $\mathbb F$.
All algebras discussed in this paper are associative,
over $\mathbb F$, and have a multiplicative identity.
Let $q$ denote a nonzero
scalar in $\mathbb F$ that is not a root of unity.
Recall the notation
\begin{align}
\label{eq:qinteger}
    \lbrack n \rbrack_q = \frac{q^n-q^{-n}}{q-q^{-1}} 
    \qquad \qquad n \in \mathbb N.
    \end{align}
\noindent
Define the algebra $U^+_q$ by generators $A, B$
and relations
\begin{align}
&A^3B-\lbrack 3\rbrack_q A^2BA+ 
\lbrack 3\rbrack_q ABA^2 -BA^3 = 0,
\label{eq:S1}
\\
&B^3A-\lbrack 3\rbrack_q B^2AB + 
\lbrack 3\rbrack_q BAB^2 -AB^3 = 0.
\label{eq:S2}
\end{align}
\noindent The algebra $U^+_q$ is called the {\it positive part of
$U_q({\widehat {\mathfrak{sl}}}_2)$}; see for example
\cite[p.~40]{hongkang} or
\cite[Corollary~3.2.6]{lusztig}.
The equations (\ref{eq:S1}), (\ref{eq:S2}) are called the
{\it $q$-Serre relations}.

\medskip

\noindent In 
\cite[p.~299]{damiani}
Damiani 
introduced some elements in $U^+_q$ denoted
  
  \begin{equation}
   \label{eqUq:PBWintro}
    \lbrace E_{n \delta + \alpha_0}\rbrace_{n=0}^\infty,
     \qquad \quad
      \lbrace E_{n \delta + \alpha_1}\rbrace_{n=0}^\infty,
       \qquad \quad
        \lbrace E_{n \delta}\rbrace_{n=1}^\infty.
	 \end{equation}
These elements are recursively defined as follows:
\begin{align}
E_{\alpha_0} = A, \qquad \qquad
E_{\alpha_1} = B, \qquad \qquad
E_{\delta} = q^{-2}BA-AB,
\label{eq:BA}
\end{align}
and for $n\geq 1$,
\begin{align}
&
E_{n \delta+\alpha_0} =
\frac{
\lbrack E_\delta, E_{(n-1)\delta+ \alpha_0} \rbrack
}
{q+q^{-1}},
\qquad \qquad
E_{n \delta+\alpha_1} =
 \frac{
 \lbrack
 E_{(n-1)\delta+ \alpha_1},
 E_\delta
 \rbrack
 }
 {q+q^{-1}},
 \label{eq:dam1intro}
 \\
 &
 \qquad \qquad
 E_{n \delta} =
 q^{-2}  E_{(n-1)\delta+\alpha_1} A
 - A E_{(n-1)\delta+\alpha_1}.
 \label{eq:dam2intro}
\end{align}
By 
\cite[p.~307]{damiani}
the elements $\lbrace E_{n\delta}\rbrace_{n=1}^\infty$ mutually
commute.
\medskip

\noindent 
In \cite[Section~5]{damiani}, Damiani showed that the elements
   (\ref{eqUq:PBWintro}) form a 
     Poincar\'e-Birkhoff-Witt (or PBW)
   basis for $U^+_q$. Here are the details.
\begin{definition}
\label{def:PBWorder}
\rm
We impose a linear order on the elements
(\ref{eqUq:PBWintro}) such that
 \begin{align}
 \label{eq:order}
 E_{\alpha_0} < E_{\delta+\alpha_0} <
  E_{2\delta+\alpha_0}
  < \cdots
  <
   E_{\delta} < E_{2\delta}
    < E_{3\delta}
 < \cdots
  <
   E_{2\delta + \alpha_1} <
     E_{\delta + \alpha_1} < E_{\alpha_1}.
     \end{align}
     \end{definition}
     \begin{proposition}
    \label{prop:PBWbasis} 
    {\rm (See \cite[p.~308]{damiani}.)}
     The vector space $U^+_q$ has a linear basis consisting of
    the products $x_1x_2\cdots x_n$
     $(n\in \mathbb N)$
     of elements in
    {\rm (\ref{eq:order})} such that
     $x_1 \leq x_2 \leq \cdots \leq x_n$.
     We interpret the empty product as the multiplicative identity in
     $U^{+}_q$.
   \end{proposition}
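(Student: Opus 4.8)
The plan is to prove the two halves of the PBW property separately: that the ordered products span $U_q^+$, and that they are linearly independent, the latter via a graded dimension count. I would first record that $U_q^+$ is $\mathbb{N}^2$-graded by the pair (degree in $A$, degree in $B$), with the defining relations (\ref{eq:S1})--(\ref{eq:S2}) homogeneous, so each graded component $U_q^+[r,s]$ is finite dimensional. A straightforward induction on the recursions (\ref{eq:dam1intro})--(\ref{eq:dam2intro}) shows that $E_{n\delta+\alpha_0}$ has bidegree $(n+1,n)$, that $E_{n\delta+\alpha_1}$ has bidegree $(n,n+1)$, and that $E_{n\delta}$ has bidegree $(n,n)$; these are exactly the positive roots $n\delta+\alpha_0$, $n\delta+\alpha_1$, $n\delta$ of $\widehat{\mathfrak{sl}}_2$ written in the basis $\alpha_0,\alpha_1$, and the order (\ref{eq:order}) is the associated convex order. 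Exhibiting a basis in each bidegree is then enough.

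For spanning, I would establish a system of straightening (Levendorskii--Soibelman type) relations: whenever two of the listed generators occur in a product out of the order (\ref{eq:order}), their product rewrites as an $\mathbb{F}$-linear combination of ordered monomials whose factors have roots lying strictly between the two offending roots. The recursions supply the engine, since they already express $E_{n\delta+\alpha_0}$ and $E_{n\delta+\alpha_1}$ as $(q+q^{-1})^{-1}$ times a commutator with $E_\delta$, and $E_{n\delta}$ as a commutator built from $E_{(n-1)\delta+\alpha_1}$ and $A$; combined with the commutativity of $\{E_{n\delta}\}_{n\geq 1}$ and the $q$-Serre relations, these let one rewrite each out-of-order commutator and then induct on a height statistic adapted to (\ref{eq:order}) until only ordered monomials remain. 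Since $A=E_{\alpha_0}$ and $B=E_{\alpha_1}$ are on the list, the generators lie in the span of ordered monomials, whence so does all of $U_q^+$.

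For linear independence I would compare graded dimensions. By the standard fact that the graded dimensions of $U_q^+$ coincide with those of the classical $U(\mathfrak{n}^+)$ for $\widehat{\mathfrak{sl}}_2$, $\dim U_q^+[r,s]$ is the affine Kostant partition number for the weight $r\alpha_0+s\alpha_1$, whose bivariate generating function is the product
\[
\prod_{n\geq 0}\frac{1}{1-a^{n+1}b^{n}}\;
\prod_{n\geq 0}\frac{1}{1-a^{n}b^{n+1}}\;
\prod_{n\geq 1}\frac{1}{1-a^{n}b^{n}},
\]
with $a,b$ tracking the $A$- and $B$-degrees, one factor per positive real root and one per imaginary root (each of multiplicity one). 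The number of ordered monomials of bidegree $(r,s)$ is, by inspection, the coefficient of $a^{r}b^{s}$ in exactly this product. Hence the count of ordered monomials matches $\dim U_q^+[r,s]$ in every bidegree; together with spanning this forces linear independence.

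The main obstacle is the straightening step, specifically the mixed commutation relations among the three families --- above all those involving the imaginary root vectors $E_{n\delta}$ --- and the verification that each straightening move strictly lowers the height statistic so that the induction terminates. An attractive alternative, and the one the present paper makes available, is to bypass straightening for independence altogether: once the graded dimensions agree it suffices to prove that the ordered monomials are linearly independent, and the injective $q$-shuffle homomorphism $\natural$ transports them into the free algebra $\mathbb{V}$, where one can check directly, in the standard basis of words, that their images are linearly independent.
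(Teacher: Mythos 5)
The first thing to note is that the paper does not prove this proposition at all: the statement carries the citation \cite[p.~308]{damiani} and is taken as an external input, alongside the other Damiani results quoted in Section~3 and the commutativity of the $E_{n\delta}$. So there is no proof of the paper's own to compare yours against; the relevant benchmark is Damiani's argument, and your outline does reproduce its broad shape---spanning via straightening relations, independence via a graded dimension count---and your bidegree computations for the three families and the product formula counting ordered monomials are both correct.

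Judged as a proof, however, the proposal has genuine gaps. First, the straightening relations are not ``supplied by the recursions'': they are the hard technical core of Damiani's paper (her Section~4), reproduced in this paper as Lemmas~\ref{lem:dam2}, \ref{lem:com2}, \ref{lem:com3}, and each of them---especially those mixing real-root vectors with the imaginary-root vectors $E_{n\delta}$---requires its own lengthy induction. Your plan moreover needs the precise qualitative shape of these relations (out-of-order products rewrite into ordered monomials whose factors lie strictly between the offending roots) in order for the height induction to terminate, and that is exactly what must be verified relation by relation; you flag this as the main obstacle, but flagging it does not fill it. Second, the ``standard fact'' that $\dim U_q^+[r,s]$ equals the affine Kostant partition number is itself a deep theorem: it amounts to knowing that the $q$-Serre relations form a complete set of relations, proved e.g.\ by Lusztig or via Kashiwara's crystal bases, and in a good part of the literature this fact is \emph{deduced from} a PBW basis of the very kind you are constructing, so you must invoke a proof of it that is independent of Proposition~\ref{prop:PBWbasis} or the argument becomes circular. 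Finally, the suggested shortcut through $\natural$ does not obviously help: Rosso's injectivity transports the independence question into $\mathbb V$, but the images of the ordered monomials are iterated $q$-shuffle products of the elements described in Theorem~\ref{thm:mainres}, and their linear independence in the word basis is not a ``direct check''---nothing in this paper performs it.
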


\noindent  Let $x,y$ 
denote noncommuting indeterminates, and let $\mathbb V$ denote the free
algebra with generators $x$, $y$.
For $n \in \mathbb N$, a {\it word of length $n$} in $\mathbb V$
is a product $v_1 v_2 \cdots v_n$
such that $v_i \in \lbrace x, y\rbrace$
for $1 \leq i \leq n$. We interpret the word of length zero
 to be the multiplicative
 identity in $\mathbb V$; this word is called {\it trivial} and denoted by $1$.
 The vector space $\mathbb V$ has a basis consisting
 of its words; this basis is called {\it standard}.
\medskip

\noindent We just defined the free algebra $\mathbb V$. There is
another algebra structure on $\mathbb V$,
called the $q$-shuffle algebra.
This algebra was introduced by Rosso
\cite{rosso1, rosso} and described further by Green
\cite{green}. We will adopt the approach of 
\cite{green}, which is well suited to our purpose.
The $q$-shuffle product 
is denoted by $\star$. To describe this product, we start
with some special cases.
We have $1 \star v = v \star 1 = v$ for $v \in \mathbb V$.
 For
 $u \in \lbrace x, y\rbrace$ and
  a nontrivial word $v= v_1v_2\cdots v_n$ in $\mathbb V$,
  \begin{align}
  \label{eq:Xcv}
  &u \star v =
  \sum_{i=0}^n v_1 \cdots v_{i} u v_{i+1} \cdots v_n
  q^{
  \langle v_1, u\rangle+
  \langle v_2, u\rangle+
  \cdots + \langle v_{i}, u\rangle},
  \\
  &v \star u = \sum_{i=0}^n v_1 \cdots v_{i} u v_{i+1} \cdots v_n
  q^{
  \langle  v_{n},u\rangle
  +
  \langle v_{n-1},u\rangle
  +
  \cdots
  +
  \langle v_{i+1},u\rangle
  },
  \label{eq:vcX}
  \end{align}
where
\bigskip

\centerline{
\begin{tabular}[t]{c|cc}
$\langle\,,\,\rangle$ & $x$ & $y$
   \\  \hline
   $x$ &
   $2$ & $-2$
     \\
     $y$ &
      $-2$ & $2$
         \\
	      \end{tabular}
	      }
	      \medskip

\noindent
   For example
   \begin{align*}
    &x \star y = xy+ q^{-2}yx,
 \qquad \qquad \quad
   y \star x = yx + q^{-2}xy,
\\ 
 &x \star x = (1+q^2)xx, 
 \qquad \qquad \quad
    y\star y = (1+q^2)yy,
   \\
   & x\star (yyy)= xyyy+ q^{-2} yxyy+ q^{-4} yyxy+q^{-6}yyyx,
    \\
   & (xyx) \star y = xyxy +
                (1+q^{-2})xyyx +
		  q^{-2} yxyx.
   \end{align*}
\noindent For nontrivial words $u=u_1u_2\cdots u_r$
and $v=v_1v_2\cdots v_s$ in $\mathbb V$,
\begin{align}
\label{eq:uvcirc}
&u \star v  = u_1\bigl((u_2\cdots u_r) \star v\bigr)
+ v_1\bigl(u \star (v_2 \cdots v_s)\bigr)
q^{
\langle u_1, v_1\rangle +
\langle u_2, v_1\rangle +
\cdots
+
\langle u_r, v_1\rangle},
\\
\label{eq:uvcirc2}
&u\star v =
\bigl(u \star (v_1 \cdots v_{s-1})\bigr)v_s +
\bigl((u_1 \cdots u_{r-1}) \star v\bigr)u_r
q^{
\langle u_r, v_1\rangle +
\langle u_r, v_2\rangle + \cdots +
\langle u_r, v_s\rangle
}.
\end{align}
 For example
   \begin{align*}
   (xx)\star (yyy) =& 
            xxyyy  +
            q^{-2}xyxyy + 
            q^{-4}xyyxy +
            q^{-6}xyyyx +
	     q^{-4}yxxyy 
             \\
	    &+ q^{-6}yxyxy +
             q^{-8}yxyyx +
	     q^{-8}yyxxy +
             q^{-10}yyxyx+
	     q^{-12}yyyxx,
       \\
       (xy)\star (xxyy) = & xyxxyy+ xxyyxy+ \lbrack 2 \rbrack_q^2 xxyxyy+
	\lbrack 3 \rbrack_q^2 xxxyyy.
   \end{align*}
With some work
(or by
\cite[p.~10]{green})
one
obtains
\begin{align}
&
x \star x \star x \star y -
\lbrack 3 \rbrack_q
x \star x\star y \star x +
\lbrack 3 \rbrack_q
x \star y \star x \star x -
y \star x \star x \star x  = 0,
\label{eq:qsc1}
\\
&
y \star y \star y \star x -
\lbrack 3 \rbrack_q
y \star y \star x \star y +
\lbrack 3 \rbrack_q
y \star x \star y \star y -
x \star y \star y \star y  = 0.
\label{eq:qsc2}
\end{align}
So in the $q$-shuffle algebra $\mathbb V$ the elements
$x,y$ satisfy the 
$q$-Serre relations.
Consequently there exists an algebra homomorphism
$\natural$ from $U^+_q$ to the $q$-shuffle algebra $\mathbb V$,
that sends $A\mapsto x$ and $B\mapsto y$.
The map $\natural$ is injective by
  \cite[Theorem~15]{rosso}. See 
\cite[p.~696]{leclerc} for more information about $\natural$.
\medskip

\noindent We now state our goal for the paper.
  We will apply the map $\natural $ to each element in 
   (\ref{eqUq:PBWintro}), and express the image in the standard basis
   for $\mathbb V$. As we will see, the coefficients have
   an attractive closed form. 
   We give our main theorem after a few comments. 

\begin{definition}
\label{def:cat1} \rm
Define $\overline x = 1$
and $\overline y = -1$.
Pick an integer $n\geq 0$ and consider a word $w=a_1a_2\cdots a_n$
in $\mathbb V$. 
The word $w$ is called {\it balanced} whenever
$\overline a_1+
\overline a_2+\cdots  + 
\overline a_n=0$; in this case $n$ is even.
The word 
$w$ is said to be {\it Catalan} whenever it
is balanced and 
$\overline a_1+
\overline a_2+\cdots +
\overline a_i\geq 0$ for $1 \leq i \leq n$.
\end{definition}

\begin{example}
\label{ex:CatEx} 
For $0\leq n \leq 3$ we give the Catalan words of length $2n$.
\bigskip

\centerline{
\begin{tabular}[t]{c|c}
   $n$  & {\rm Catalan words of length $2n$} 
   \\
   \hline
 $ 0 $  &  $1$
 \\
 $ 1 $  &  $xy$
 \\
 $ 2 $  &  $xyxy, \quad xxyy$
 \\
 $ 3 $  & 
 $xyxyxy,
 \quad xxyyxy,
 \quad xyxxyy,
 \quad xxyxyy,
 \quad xxxyyy$
   \end{tabular}}
\end{example}

\begin{definition}
\label{def:CnIntro}
\rm For $n\geq 0$ define
\begin{align}
C_n = 
&  \sum a_1a_2\cdots a_{2n} 
\lbrack 1\rbrack_q
\lbrack 1+\overline a_1\rbrack_q
\lbrack 1+\overline a_1+\overline a_2\rbrack_q
\cdots 
\lbrack 1+\overline a_1+\overline a_2+ \cdots +\overline a_{2n}\rbrack_q,
\label{eq:cdefIntro}
\end{align}
where the sum is over all the Catalan words $a_1 a_2 \cdots a_{2n}$
in $\mathbb V$ that have length $2n$.
We call $C_n$ the {\it $n^{\rm th}$ Catalan element} in $\mathbb V$.
\end{definition}

\begin{example} 
\label{ex:CnIntro} We have
\begin{align*}
&\qquad \qquad C_0 = 1,
\qquad \qquad
 C_1 = \lbrack 2 \rbrack_q  xy,
\qquad \qquad
 C_2 = \lbrack 2\rbrack^2_q xyxy+ 
\lbrack 3 \rbrack_q \lbrack 2 \rbrack^2_q 
 xxyy,
\\
&
C_3 = 
\lbrack 2 \rbrack^3_q
      xyxyxy + 
       \lbrack 3\rbrack_q
\lbrack 2 \rbrack^3_q
      xxyyxy
     +
     \lbrack 3 \rbrack_q
\lbrack 2 \rbrack^3_q
     xyxxyy
     +
     \lbrack 3 \rbrack^2_q
\lbrack 2 \rbrack^3_q
     xxyxyy
     +
\lbrack 4 \rbrack_q
\lbrack 3 \rbrack^2_q 
\lbrack 2 \rbrack^2_q 
     xxxyyy.
\end{align*}
\end{example}

\noindent The following is our main result.

\begin{theorem}
\label{thm:mainres} 
The map $\natural$ sends
\begin{align}
E_{n\delta+\alpha_0} \mapsto  q^{-2n}(q-q^{-1})^{2n} xC_n,
\qquad  \qquad 
E_{n\delta+\alpha_1} \mapsto
 q^{-2n}(q-q^{-1})^{2n} C_ny 
\label{eq:main1}
\end{align}
for $n\geq 0$, and
\begin{align}
E_{n\delta} \mapsto  -q^{-2n}(q-q^{-1})^{2n-1} C_n
\label{eq:main2}
\end{align}
for $n\geq 1$.
We emphasize that in 
{\rm (\ref{eq:main1})}, 
the notations $xC_n$ and $C_ny$ refer to the concatenation product.
\end{theorem}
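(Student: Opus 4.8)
The plan is to induct on $n$, using that $\natural$ is an algebra homomorphism to transport Damiani's recursions (\ref{eq:BA})--(\ref{eq:dam2intro}) into the $q$-shuffle algebra. Since $A\mapsto x$, $B\mapsto y$, and $\natural$ is multiplicative for $\star$, a direct computation from (\ref{eq:Xcv}), (\ref{eq:vcX}) gives
\begin{align*}
\natural(E_\delta)=q^{-2}\,y\star x-x\star y=(q^{-4}-1)xy=-q^{-2}(q-q^{-1})(q+q^{-1})xy,
\end{align*}
which already verifies (\ref{eq:main2}) at $n=1$ since $C_1=(q+q^{-1})xy$; the cases $n=0$ of (\ref{eq:main1}) are immediate from $C_0=1$. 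Writing $[P,Q]_\star=P\star Q-Q\star P$, a short manipulation shows the inductive steps for the three families are equivalent to the following identities internal to $\mathbb V$:
\begin{align*}
&(xy)\star(xC_{n-1})-(xC_{n-1})\star(xy)=-(q-q^{-1})\,xC_n,\\
&(C_{n-1}y)\star(xy)-(xy)\star(C_{n-1}y)=-(q-q^{-1})\,C_ny,\\
&(C_{n-1}y)\star x-q^{2}\,x\star(C_{n-1}y)=-(q-q^{-1})\,C_n,
\end{align*}
where products against $x,y,xy$ are the $\star$-product while $xC_{n-1}$, $C_{n-1}y$, $xC_n$ denote concatenation.

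Before proving these I would remove one of them by symmetry. Let $\sigma\colon\mathbb V\to\mathbb V$ be the linear map reversing each word and swapping $x\leftrightarrow y$. A reflection of Dyck paths shows that the height profile of $\sigma(w)$ is that of $w$ read backwards, so $\sigma$ permutes the Catalan words of length $2n$ and preserves each weight in (\ref{eq:cdefIntro}); hence $\sigma(C_n)=C_n$, and consequently $\sigma(xC_n)=C_ny$ and $\sigma(xy)=xy$. Since reversal is an anti-automorphism and the $x\leftrightarrow y$ swap an automorphism of the $q$-shuffle algebra (both following from the symmetry of $\langle\,,\,\rangle$), their composite $\sigma$ is an anti-automorphism; applying it carries the first displayed identity precisely onto the second. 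Thus it suffices to prove the last two, and the $E_{n\delta+\alpha_0}$ family follows from the $E_{n\delta+\alpha_1}$ family.

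The model computation is the third, single-letter, identity, which I would prove from the insertion formulas (\ref{eq:Xcv}), (\ref{eq:vcX}). Fix a word $v=v_1\cdots v_{2n-1}$ occurring in $C_{n-1}y$ (so $v=wy$ with $w$ Catalan) and set $h_i=\overline{v_1}+\cdots+\overline{v_i}$, so $h_0=0$, $h_{2n-1}=-1$, and $h_i\geq 0$ for $i\leq 2n-2$. Because $\langle v_j,x\rangle=2\overline{v_j}$, inserting $x$ after position $i$ contributes the weight $q^{2h_i}$ in $x\star v$ and the weight $q^{2(h_{2n-1}-h_i)}=q^{-2-2h_i}$ in $v\star x$; hence in $(C_{n-1}y)\star x-q^{2}x\star(C_{n-1}y)$ the resulting word acquires the scalar $q^{-2-2h_i}-q^{2+2h_i}$, times the coefficient of $v$. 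The decisive point is that this scalar vanishes exactly when $h_i=-1$, which happens only for the insertion $i=2n-1$ --- and that is the unique insertion producing a non-Catalan word. Every other insertion yields a Catalan word of length $2n$, so all surviving terms lie in the span of the Catalan words, as required.

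The remaining task, and where I expect the main obstacle, is matching coefficients. A given Catalan word $u$ of length $2n$ is produced by several pairs $(v,i)$, one for each letter $x$ of $u$ whose deletion leaves a word of the form (Catalan)$\,y$, so the coefficient of $u$ on the left is a sum over these sites. Factoring the Catalan weight of $u$ through the insertion site as $\prod_{0\le k\le i}[1+h_k]_q\cdot\prod_{i\le k\le 2n-1}[2+h_k]_q$, the required equality becomes a telescoping identity among $q$-integers asserting that this weighted sum equals $-(q-q^{-1})$ times the Catalan weight of $u$; carrying out this telescoping is the technical heart. For the second identity one inserts the two-letter word $xy$; using (\ref{eq:uvcirc}), (\ref{eq:uvcirc2}) this is organized as two successive single-letter insertions, to which the same vanishing-and-telescoping mechanism applies, and this two-letter bookkeeping is the most laborious part of the argument. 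Granting these identities, the induction closes and (\ref{eq:main1}), (\ref{eq:main2}) follow.
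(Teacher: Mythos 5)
Your reduction is sound and in fact mirrors the paper's own architecture: verify the base cases, recast Damiani's defining recursions under $\natural$ as identities in the $q$-shuffle algebra $\mathbb V$, and use the reverse-and-swap map (your $\sigma$ is exactly the paper's antiautomorphism $\zeta$) to halve the work; your vanishing analysis for the single-letter identity (the factor $q^{-2-2h_i}-q^{2+2h_i}$ killing precisely the one non-Catalan insertion) correctly reproduces the content of Lemmas \ref{lem:bal}--\ref{lem:com}. But the proposal stops exactly where the proof actually lives. The coefficient-matching step you defer as ``a telescoping identity among $q$-integers'' is the entire technical content of the paper's argument: to prove it, the paper introduces the elevation sequence and profile of a Catalan word (Definitions \ref{def:elevation}, \ref{def:profile}), expresses $C(w)$ as a ratio of $q$-factorials indexed by the profile (Lemma \ref{lem:Cform}), and establishes the summation identity of Proposition \ref{prop:tech1} via (\ref{lem:geom}), (\ref{lem:geom2}). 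Note that a given Catalan word $u$ of length $2n$ arises not from one $v\in{\rm Cat}_{n-1}$ at several sites but from several distinct $v$'s, so the sum you must evaluate mixes different weights $C(v)$; calling it telescoping does not identify the mechanism, and no proof is given. This is a missing idea, not a routine verification.

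The second gap is your plan for the $xy$-commutator identities. You propose to prove $(C_{n-1}y)\star(xy)-(xy)\star(C_{n-1}y)=-(q-q^{-1})\,C_ny$ by a direct two-letter insertion analysis, conceding it is ``the most laborious part.'' The paper never performs this computation, and for good reason: shuffling a two-letter word produces cross terms with no vanishing criterion as clean as $h_i=-1$, and it is not evident your mechanism extends. Instead, the paper first gets both single-letter identities (\ref{eq:step4}), (\ref{eq:step5}) (one directly, one by $\zeta$), and then in Proposition \ref{prop:last} peels off the leading letter using the recursion (\ref{eq:uvcirc}) and invokes the commutation $(xy)\star C_{n-1}=C_{n-1}\star(xy)$ --- which comes for free from Damiani's theorem that the $E_{i\delta}$ mutually commute, transported through $\natural$ by the inductive hypothesis. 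With that trick the two-letter identity follows in two lines from the single-letter ones. Without either this shortcut or the actual two-letter bookkeeping, your induction does not close; as written, both computational pillars of the argument are asserted rather than proved.
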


\noindent In the algebra $U^+_q$,
the PBW basis elements
   (\ref{eqUq:PBWintro}) are known to satisfy certain relations
   \cite[Section~4]{damiani}.
Applying
$\natural $ to these relations 
and using
Theorem
\ref{thm:mainres},  we obtain some relations involving
the Catalan elements.
In Section 3 we will discuss these relations
in detail, and for now give one example.
Recall from below 
 (\ref{eq:dam2intro}) 
that $\lbrace E_{n\delta}\rbrace_{n=1}^\infty$ mutually commute.

\begin{corollary}
\label{cor:com}
For $i,j \in \mathbb N$,
\begin{equation}
C_i \star C_j = C_j \star C_i.
\end{equation}
\end{corollary}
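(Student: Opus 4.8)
The plan is to deduce the commutativity of the Catalan elements directly from Theorem~\ref{thm:mainres} together with the fact, recorded below~(\ref{eq:dam2intro}), that the Damiani elements $\lbrace E_{n\delta}\rbrace_{n=1}^\infty$ mutually commute in $U^+_q$. Since $\natural$ is an algebra homomorphism, it carries the commuting relation $E_{i\delta}E_{j\delta}=E_{j\delta}E_{i\delta}$ in $U^+_q$ to the relation $\natural(E_{i\delta})\star \natural(E_{j\delta})=\natural(E_{j\delta})\star\natural(E_{i\delta})$ in the $q$-shuffle algebra $\mathbb V$. First I would apply~(\ref{eq:main2}) to rewrite each image, obtaining for $i,j\geq 1$ the identity
\begin{equation*}
q^{-2i}(q-q^{-1})^{2i-1}C_i \star q^{-2j}(q-q^{-1})^{2j-1}C_j
=
q^{-2j}(q-q^{-1})^{2j-1}C_j \star q^{-2i}(q-q^{-1})^{2i-1}C_i.
\end{equation*}
The scalar prefactors on the two sides are identical and, since $q$ is not a root of unity, the common factor $q^{-2i-2j}(q-q^{-1})^{2i+2j-2}$ is nonzero and may be cancelled, leaving $C_i \star C_j = C_j \star C_i$ for all $i,j\geq 1$.

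It then remains to handle the boundary cases where $i=0$ or $j=0$. Here I would simply invoke $C_0=1$ from Example~\ref{ex:CnIntro}: the element $1$ is the multiplicative identity of the $q$-shuffle algebra (recall $1\star v = v\star 1 = v$ for all $v\in\mathbb V$), so $C_0 \star C_j = C_j = C_j \star C_0$ holds trivially for every $j\in\mathbb N$. Combining this with the case $i,j\geq 1$ established above yields $C_i \star C_j = C_j \star C_i$ for all $i,j\in\mathbb N$, which is exactly the assertion of the corollary.

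I expect this argument to be essentially immediate, with no genuine obstacle, because all the substantive work has already been done: Theorem~\ref{thm:mainres} provides the explicit, injective correspondence between the $E_{n\delta}$ and the $C_n$, and Damiani's result supplies the commutativity upstream. The only point requiring the slightest care is the cancellation of scalars, which is legitimate precisely because $q$ is assumed to be nonzero and not a root of unity, so that $(q-q^{-1})$ and all powers of $q$ are invertible in $\mathbb F$. One could alternatively phrase the whole deduction as transporting the commutativity across the algebra homomorphism $\natural$ and then rescaling, but the direct substitution above is the cleanest route and makes the role of each hypothesis transparent.
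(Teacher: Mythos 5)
Your proposal is correct and follows exactly the route the paper intends: apply the algebra homomorphism $\natural$ to Damiani's relation $E_{i\delta}E_{j\delta}=E_{j\delta}E_{i\delta}$, identify the images via~(\ref{eq:main2}), cancel the common nonzero scalar, and dispose of the $i=0$ or $j=0$ case using $C_0=1$. Your explicit treatment of the scalar cancellation and the boundary case is a careful spelling-out of details the paper leaves implicit, but it is the same argument.
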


\noindent Let $\lbrace \lambda_n \rbrace_{n=1}^\infty$
denote mutually commuting indeterminates. Let
$\mathbb F \lbrack \lambda_1, \lambda_2,\ldots \rbrack$
denote the algebra consisting of the polynomials in
$\lbrace \lambda_n \rbrace_{n=1}^\infty$ that have all coefficients
in $\mathbb F$.
Proposition
    \ref{prop:PBWbasis} and Corollary
\ref{cor:com} imply the following.

\begin{corollary} Let $C$ denote the subalgebra of
the $q$-shuffle algebra $\mathbb V$ that is generated by
$\lbrace C_n \rbrace_{n=1}^\infty$.
Then there exists an algebra
isomorphism 
$\mathbb F \lbrack \lambda_1, \lambda_2,\ldots \rbrack \to C$ 
that sends $\lambda_n \mapsto C_n$ for $n\geq 1$.
\end{corollary}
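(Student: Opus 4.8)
The plan is to produce the map explicitly and then verify it is a bijective algebra homomorphism, using exactly the two ingredients flagged before the statement. First I would invoke Corollary \ref{cor:com}: since $\lbrace C_n\rbrace_{n=1}^\infty$ mutually commute under the $q$-shuffle product, the universal property of the polynomial algebra $\mathbb F\lbrack \lambda_1,\lambda_2,\ldots\rbrack$ supplies a unique algebra homomorphism $\phi:\mathbb F\lbrack \lambda_1,\lambda_2,\ldots\rbrack \to \mathbb V$ with $\phi(\lambda_n)=C_n$ for $n\geq 1$. Its image is by construction the subalgebra $C$ generated by $\lbrace C_n\rbrace_{n=1}^\infty$, so $\phi$ is surjective onto $C$. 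Everything then reduces to showing $\phi$ is injective, equivalently that $\lbrace C_n\rbrace_{n=1}^\infty$ are algebraically independent in $\mathbb V$.

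For injectivity I would transport the question back into $U^+_q$ via the injectivity of $\natural$. By Theorem \ref{thm:mainres}, equation (\ref{eq:main2}), we have $\natural(E_{n\delta})=c_n C_n$ with $c_n=-q^{-2n}(q-q^{-1})^{2n-1}$ a nonzero scalar, so $C_n=c_n^{-1}\natural(E_{n\delta})$. Suppose $p$ is a polynomial with $p(C_1,C_2,\ldots)=0$ in $\mathbb V$. Substituting $C_n=c_n^{-1}\natural(E_{n\delta})$ and using that $\natural$ is injective forces the relation $p(c_1^{-1}E_{\delta},c_2^{-1}E_{2\delta},\ldots)=0$ in $U^+_q$. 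Since the nonzero rescalings $c_n^{-1}$ do not affect the property, it suffices to prove that $\lbrace E_{n\delta}\rbrace_{n=1}^\infty$ are algebraically independent in $U^+_q$.

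This last point is where Proposition \ref{prop:PBWbasis} does the work, and it is the crux of the argument. Because the $E_{n\delta}$ mutually commute (recalled below (\ref{eq:dam2intro})), any monomial in them can be reordered into the normal form $E_{\delta}^{m_1}E_{2\delta}^{m_2}\cdots$ with finitely many exponents nonzero. In the linear order (\ref{eq:order}), the elements $\lbrace E_{n\delta}\rbrace_{n=1}^\infty$ occupy a consecutive block $E_{\delta}<E_{2\delta}<E_{3\delta}<\cdots$, so each such normal-form monomial is precisely one of the PBW basis vectors of Proposition \ref{prop:PBWbasis}, namely one built from the imaginary root vectors alone. Distinct exponent sequences yield distinct PBW basis vectors, which are linearly independent in $U^+_q$; hence the monomials in the $E_{n\delta}$ are linearly independent, which is exactly the claimed algebraic independence. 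The only genuinely delicate step is this recognition of the ordered products of powers of the $E_{n\delta}$ as honest PBW basis elements: it hinges on the block structure of the order (\ref{eq:order}), and once that is used the remaining verifications are formal. Tracing the reduction backward then shows $p=0$, so $\phi$ is injective and therefore the desired isomorphism.
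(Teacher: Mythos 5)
Your proposal is correct and is essentially the argument the paper intends: Corollary \ref{cor:com} supplies the surjection from the polynomial algebra, and injectivity is obtained by pulling a purported relation back through the injective map $\natural$ (using Theorem \ref{thm:mainres} to identify $C_n$ with a nonzero multiple of $\natural(E_{n\delta})$) and then observing that the ordered monomials in the commuting elements $E_{n\delta}$ are distinct PBW basis vectors from Proposition \ref{prop:PBWbasis}. The paper leaves this chain implicit ("Proposition \ref{prop:PBWbasis} and Corollary \ref{cor:com} imply the following"), and your write-up fills in exactly those steps.
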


\section{The proof of Theorem
\ref{thm:mainres} }

\noindent In this section we prove Theorem
\ref{thm:mainres}. Before starting the main argument, we
introduce a map that will simplify some of our proofs.
Recall that for any algebra $\mathcal A$, 
an {\it antiautomorphism}
of $\mathcal A$ is an $\mathbb F$-linear bijection 
$\gamma : \mathcal A \to \mathcal A$ such that
$(ab)^\gamma = b^\gamma a^\gamma$ for all $a,b \in \mathcal A$.
By the form of the $q$-Serre relations
(\ref{eq:S1}),
(\ref{eq:S2}) we see that  
there exists an
antiautomorphism $\zeta$ of $U^+_q$ that swaps $A$, $B$.
Consider what $\zeta$ does to
 the PBW basis  (\ref{eqUq:PBWintro}).
By \cite[p.~307]{damiani}, the following holds in $U_q^+$ for $n\geq 1$:
\begin{align}
E_{n \delta} =
 q^{-2} B  E_{(n-1)\delta+\alpha_0}
 - E_{(n-1)\delta+\alpha_0}B.
 \label{eq:dam3intro}
 \end{align}
\noindent By this and
(\ref{eq:BA})--(\ref{eq:dam2intro}) we see that 
the map $\zeta$ fixes
$E_{n \delta}$ for $n\geq 1$, and
swaps 
$E_{n \delta + \alpha_0}$,
$E_{n \delta + \alpha_1}$ for $n\in \mathbb N$.
\medskip

\noindent We have been discussing the map $\zeta$
on $U^+_q$. There is an analogous map on
$\mathbb V$, that we will also call $\zeta$.
This $\zeta$ is the antiautomorphism
of the free algebra $\mathbb V$ that swaps $x, y$.
For example, $\zeta$ sends
\begin{align*}
xxyxxy\leftrightarrow xyyxyy, \qquad \qquad
xxyyxx \leftrightarrow yyxxyy, \qquad \qquad 
xxxyyy \leftrightarrow xxxyyy.
\end{align*}
The map $\zeta$ on $\mathbb V$ permutes each of the following sets:
(i) the words in $\mathbb V$;
(ii) the balanced words in $\mathbb V$;
(iii) the Catalan words in $\mathbb V$.
Moreover $\zeta$ fixes $C_n$ for $n\in \mathbb N$.
 By  (\ref{eq:Xcv})--(\ref{eq:uvcirc2})
the map $\zeta$ on $\mathbb V$ is an antiautomorphism
of the $q$-shuffle algebra $\mathbb V$.
Recall the map $\natural : U^+_q \to \mathbb V$ from  below
(\ref{eq:qsc2}).  By construction the following diagram commutes:

\begin{equation*}
\begin{CD}
U^+_q @>\natural  >>
               {\mathbb V}
              \\
         @V \zeta VV                   @VV \zeta V \\
         U^+_q @>>\natural >
                                  {\mathbb  V}
                        \end{CD}
   \end{equation*}

\noindent Returning to Theorem 
\ref{thm:mainres}, we now discuss our proof strategy.
One routinely checks that
(\ref{eq:main1}) holds for $n=0$ and
(\ref{eq:main2}) holds for $n=1$.
The PBW basis elements
   (\ref{eqUq:PBWintro}) satisfy the recurrence
(\ref{eq:dam1intro}),
(\ref{eq:dam2intro}),
 (\ref{eq:dam3intro}).
The candidate images of 
   (\ref{eqUq:PBWintro}) under $\natural$ are given in
(\ref{eq:main1}), (\ref{eq:main2}).
We will show that these candidate images
satisfy a recurrence analogous to
(\ref{eq:dam1intro}),
(\ref{eq:dam2intro}),
 (\ref{eq:dam3intro}),
 which looks as follows
in terms of the Catalan elements.
We will show that for $n\geq 1$,
\begin{align}
 xC_n &= \frac{ (xC_{n-1})\star (xy)- (xy) \star (x C_{n-1})}{q-q^{-1}},
\label{eq:step2}
\\
 C_ny &= \frac{ 
(xy) \star ( C_{n-1}y)
-
(C_{n-1} y)\star (xy)
}{q-q^{-1}},
\label{eq:step3}
\\
q^{-1} C_n &= \frac{q x \star (C_{n-1}y)- q^{-1} (C_{n-1}y) \star x}{q-q^{-1}},
\label{eq:step4}
\\
q^{-1} C_n &= \frac{
q  (xC_{n-1})\star y
- 
q^{-1} y \star(x C_{n-1}) 
}{q-q^{-1}}.
\label{eq:step5}
\end{align}
\noindent We will show these in order
(\ref{eq:step4}),
(\ref{eq:step5}),
(\ref{eq:step2}),
(\ref{eq:step3}).
\medskip

\noindent We just gave our proof strategy. We now proceed with
the main argument.
\begin{lemma} 
\label{lem:bal}
For a balanced word $v=a_1a_2\cdots a_m$,
\begin{align}
\frac{q x \star (vy) - q^{-1} (vy)\star x}{q-q^{-1}} 
= q^{-1} \sum_{i=0}^m 
a_1 \cdots a_{i} x a_{i+1} \cdots a_m y \lbrack 2+ 2\overline a_1 +
2 \overline a_2 + \cdots + 2 \overline a_{i} \rbrack_q.
\label{eq:exp1}
\end{align}
\end{lemma}
\begin{proof} To verify 
(\ref{eq:exp1}),
 expand the left-hand side
using 
  (\ref{eq:Xcv}),
  (\ref{eq:vcX}) and evaluate the result
using
(\ref{eq:qinteger}).
\end{proof}

\noindent 
For notational convenience, we endow the vector space
$\mathbb V$ with a certain bilinear form.
There exists a unique bilinear form 
$( \,,\,)$: $\mathbb V \times \mathbb V \to \mathbb F$ with 
respect to which the standard basis is orthonormal.
The bilinear form
$( \,,\,)$ is symmetric and nondegenerate.
For $v \in \mathbb V$ we have 
\begin{align}
v = \sum_{w} w (w, v),
\label{eq:vexpand}
\end{align}
where the sum is over all the words $w$ in $\mathbb V$.
Of course, in this sum the coefficient
$( w, v)$ is nonzero for only finitely many
$w$.

\begin{lemma}
\label{lem:balw}
For a balanced word $v=a_1a_2\cdots a_m$ and any word $w$,
\begin{align}
\Bigl(
\frac{q x \star (vy) - q^{-1} (vy)\star x}{q-q^{-1}} 
,w \Bigr)
= 
q^{-1} \sum_{i} \lbrack 2+ 2\overline a_1 +  2  \overline a_2 + \cdots + 2 \overline a_{i} \rbrack_q,
\label{eq:wvinner}
\end{align}
where the sum is over the integers $i$ $(0 \leq i \leq m)$ such that
$w=a_1a_2\cdots a_i x a_{i+1} \cdots a_m y$.
\end{lemma}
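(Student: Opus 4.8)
The plan is to derive Lemma~\ref{lem:balw} as an immediate consequence of Lemma~\ref{lem:bal} by pairing both sides of (\ref{eq:exp1}) against an arbitrary word $w$ with respect to the bilinear form $(\,,\,)$. Since the standard basis is orthonormal for this form, the pairing simply reads off the coefficient of $w$ in the right-hand side of (\ref{eq:exp1}).

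First I would apply the linear functional $(\,\cdot\,,w)$ to both sides of (\ref{eq:exp1}). The left-hand side becomes exactly the left-hand side of (\ref{eq:wvinner}), so nothing needs to be done there. For the right-hand side, I would use linearity of the form together with orthonormality: for each $i$ with $0 \leq i \leq m$, the term $a_1 \cdots a_i\, x\, a_{i+1} \cdots a_m\, y$ is a single word in the standard basis, and its inner product with $w$ is $1$ if that word equals $w$ and $0$ otherwise. Hence only those indices $i$ for which $w = a_1 a_2 \cdots a_i\, x\, a_{i+1} \cdots a_m\, y$ survive, each contributing its scalar coefficient $q^{-1}\lbrack 2 + 2\overline a_1 + 2\overline a_2 + \cdots + 2\overline a_i\rbrack_q$. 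Summing over exactly those $i$ yields the right-hand side of (\ref{eq:wvinner}).

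The one point requiring a word of care is that distinct indices $i$ could in principle produce the same word $w$, so a single $w$ may receive contributions from several values of $i$; this is precisely why the statement sums over \emph{all} qualifying $i$ rather than asserting a single term. The orthonormality of the standard basis handles this automatically, since the coefficient extracted by $(\,\cdot\,,w)$ is additive over repeated occurrences. There is no genuine obstacle here: the lemma is a routine repackaging of Lemma~\ref{lem:bal} in the language of the bilinear form, and its purpose is bookkeeping for later steps (it isolates, for a fixed target word $w$, exactly which insertion positions $i$ contribute). I would therefore present the proof in a single short paragraph: apply $(\,\cdot\,,w)$ to (\ref{eq:exp1}), invoke orthonormality of the standard basis, and conclude.
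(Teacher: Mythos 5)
Your proposal is correct and is essentially identical to the paper's own proof, which simply says to take the inner product of each side of (\ref{eq:exp1}) with $w$ and relies on orthonormality of the standard basis in exactly the way you describe. Your extra remark that several insertion positions $i$ may produce the same word $w$ (when consecutive letters equal $x$) is a valid clarification of why the right-hand side is a sum over all qualifying $i$, but it does not change the argument.
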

\begin{proof} In equation
(\ref{eq:exp1}), take the inner product of each side with $w$.
\end{proof}

\begin{lemma}
\label{lem:com}
Referring to Lemma
\ref{lem:balw}, assume that $v$ is Catalan and $w$ is not.
Then in 
{\rm (\ref{eq:wvinner})} each side is zero.
\end{lemma}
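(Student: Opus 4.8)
The plan is to show that the index set of the sum in (\ref{eq:wvinner}) is empty; once this is established, the right-hand side is a sum over no terms, hence zero, and the left-hand side is zero by Lemma \ref{lem:balw}. Thus the whole lemma reduces to the combinatorial claim that there is no integer $i$ $(0 \leq i \leq m)$ with $w = a_1 \cdots a_i\, x\, a_{i+1} \cdots a_m\, y$.

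I would argue this claim by contradiction. Suppose such an $i$ exists, so that $w$ is obtained from $v$ by inserting a letter $x$ immediately after $a_i$ and appending a letter $y$ at the end. First I would record that $w$ is balanced: since $v$ is balanced and the inserted letters contribute $\overline x + \overline y = 0$, the total letter-sum of $w$ vanishes. Then, writing $w = b_1 b_2 \cdots b_{m+2}$ and $S_k = \overline a_1 + \cdots + \overline a_k$ for the partial sums of $v$, I would compute the partial sums $\overline b_1 + \cdots + \overline b_j$ by splitting into four regions: the positions before the inserted $x$, the position of the inserted $x$, the positions between the $x$ and the appended $y$, and the final position.

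The key computation is that in each region the partial sum of $w$ equals $S_j$ (for $1 \leq j \leq i$), or $1 + S_i$ (at the inserted $x$), or $1 + S_{j-1}$ (for $i+2 \leq j \leq m+1$), while the final partial sum equals $1 + S_m - 1 = 0$ because $v$ is balanced. Since $v$ is Catalan, every $S_k$ is nonnegative, so every partial sum of $w$ is nonnegative. Together with balancedness this says exactly that $w$ is Catalan, contradicting the hypothesis. Hence no such $i$ exists and the sum is empty.

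The main obstacle is purely the bookkeeping of this partial-sum computation across the four regions: one must check that inserting $x$ (which raises every later partial sum by one) only helps maintain nonnegativity, and that appending $y$ at the very end restores the total to zero without producing a negative value before the last position. I expect no genuine difficulty beyond careful indexing. I would also remark that cancellation among the $q$-integers $\lbrack 2 + 2 S_i \rbrack_q$ never needs to be considered: each such term is in fact nonzero, since $S_i \geq 0$ and $q$ is not a root of unity, and the sum vanishes for the stronger reason that it has no terms at all.
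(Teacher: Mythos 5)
Your proposal is correct and matches the paper's proof in substance: the paper disposes of the lemma with the single observation that $a_1\cdots a_i x a_{i+1}\cdots a_m y$ is Catalan for every $i$ $(0\leq i\leq m)$ when $v$ is Catalan, so the non-Catalan word $w$ cannot appear as any term and the sum in (\ref{eq:wvinner}) is empty. Your partial-sum bookkeeping simply fills in the details of that observation (in contrapositive form), and your closing remark about the $q$-integers being nonzero, while true, is not needed since the sum vanishes for having no terms.
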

\begin{proof} The word $a_1a_2\cdots a_i x a_{i+1} \cdots a_my$ 
is Catalan for $0 \leq i \leq m$.
\end{proof}

\begin{definition}
\label{def:Catset}
\rm  For $n\in \mathbb N$ let
${\rm Cat}_n$ denote the set of Catalan words in $\mathbb V$ 
that have length $2n$. 
\end{definition}

\begin{definition}
\label{def:Catcoef}
\rm
For a Catalan word $w = a_1a_2\cdots a_{2n}$ define
\begin{align}
C(w) = 
\lbrack 1 \rbrack_q
\lbrack 1+\overline a_1\rbrack_q
\lbrack 1+\overline a_1+\overline a_2\rbrack_q
\cdots 
\lbrack 1+\overline a_1+\overline a_2+ \cdots + \overline a_{2n}\rbrack_q.
\label{eq:Cform}
\end{align}
\end{definition}

\noindent 
Using 
Definitions
\ref{def:Catset},
\ref{def:Catcoef} we can rewrite
(\ref{eq:cdefIntro}) as follows:
\begin{align}
\label{lem:short}
C_n = \sum_{w\in {\rm Cat}_n} w C(w)
\qquad \qquad n\in \mathbb N.
\end{align}

\begin{definition}
\label{def:elevation}
\rm
Let $w=a_1a_2\cdots a_{2n}$ denote a Catalan
word. Its {\it elevation sequence} is
$(e_0,e_1,\ldots, e_{2n})$ with
$e_i = 
\overline a_1+
\overline a_2+\cdots +
\overline a_i $  $(0 \leq i \leq 2n)$.
\end{definition}

\begin{example}\rm We display each Catalan word
of length 6, along with its elevation sequence. 
\bigskip

\centerline{
\begin{tabular}[t]{cc}
   Catalan word & elevation sequence 
   \\
   \hline
 $ xyxyxy $  &  $(0,1,0,1,0,1,0)$
 \\
 $ xyxxyy $  &  $(0,1,0,1,2,1,0)$
 \\
 $ xxyyxy $  &  $(0,1,2,1,0,1,0)$
 \\
 $ xxyxyy $  &  $(0,1,2,1,2,1,0)$
 \\
 $ xxxyyy $  &  $(0,1,2,3,2,1,0)$
   \end{tabular}}
\end{example}

\noindent 
Our next goal is the following.
For each Catalan word $w$ we define
a sequence of integers called the profile of $w$.
\begin{definition}
\label{def:profile}
\rm
Let $w$ denote a Catalan word. The {\it profile} of $w$
is obtained from its
 elevation sequence
$(e_0,e_1,\ldots, e_{2n})$ by deleting all the
$e_i$ such that $1 \leq i \leq 2n-1$  and
$e_{i+1}-e_i$,
$e_i-e_{i-1}$
have the same sign.
\end{definition}

\begin{example}\rm We display each Catalan word
of length 6, along with its profile.
\bigskip

\centerline{
\begin{tabular}[t]{cc}
   Catalan word & profile 
   \\
   \hline
 $ xyxyxy $  &  $(0,1,0,1,0,1,0)$
 \\
 $ xyxxyy $  &  $(0,1,0,2,0)$
 \\
 $ xxyyxy $  &  $(0,2,0,1,0)$
 \\
 $ xxyxyy $  &  $(0,2,1,2,0)$
 \\
 $ xxxyyy $  &  $(0,3,0)$
   \end{tabular}}
\end{example}

\noindent The profile notation will play an important role
in our main calculations, so let us consider it from
several points of view. We will do this over the next three lemmas.
For these lemmas the proof is routine, and omitted.
\medskip

\noindent The elevation sequence of a Catalan word is determined
by its profile in the following way.
\begin{lemma}
\label{lem:elevation}
The elevation sequence
of a Catalan word with profile
$(\ell_0, h_1, \ell_1, h_2,  \ldots, h_{r},\ell_{r})$ is
given below:
\begin{align*}
&(0,1,2,\ldots,
h_1-1,h_1, h_1-1,\ldots,
\ell_1+1,\ell_1, \ell_1+1, \ldots, 
h_2-1,h_2, h_2-1,\ldots
\\
&
\qquad \qquad \qquad 
\qquad \qquad \qquad 
\ldots,
\ell_2+1,\ell_2, \ell_2+1, \ldots, 
h_r-1,h_r, h_r-1,\ldots 2,1,0).
\end{align*}
\end{lemma}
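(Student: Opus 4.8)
The plan is to exploit the fact that a Catalan word has a unit-step elevation sequence. The first observation I would record is that if $w=a_1a_2\cdots a_{2n}$ is Catalan with elevation sequence $(e_0,e_1,\ldots,e_{2n})$, then $e_i-e_{i-1}=\overline a_i\in\{1,-1\}$ for $1\le i\le 2n$, so each consecutive difference has magnitude $1$. Moreover $e_0=e_{2n}=0$ and $e_i\ge 0$ throughout, which forces the first step to be $+1$ and the last step to be $-1$. Thus the sign sequence $\bigl(\mathrm{sign}(e_i-e_{i-1})\bigr)_{i=1}^{2n}$ begins with $+$ and ends with $-$.

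Next I would reinterpret Definition \ref{def:profile} in terms of these step signs. An interior index $e_i$ is deleted precisely when the incoming step $e_i-e_{i-1}$ and the outgoing step $e_{i+1}-e_i$ share the same sign; equivalently, the retained interior indices are exactly the turning points, where the sign switches. Since the step signs begin with $+$ and end with $-$, the retained turning points alternate between local maxima (a switch from $+$ to $-$) and local minima (a switch from $-$ to $+$), the first and the last both being maxima. Together with the two endpoints $e_0$ and $e_{2n}$, which are both $0$, this yields a profile of the stated shape $(\ell_0,h_1,\ell_1,\ldots,h_r,\ell_r)$ with $\ell_0=\ell_r=0$, the $h_k$ the successive maxima and the interior $\ell_k$ the successive minima.

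The core of the argument is then a monotone-interpolation step. Between two consecutive retained indices all intervening step signs agree, so the elevation values on that segment are strictly monotone; because each step has magnitude exactly $1$, a strictly increasing run from value $\ell_{k-1}$ to value $h_k$ must pass through every integer $\ell_{k-1},\ell_{k-1}+1,\ldots,h_k$, and likewise a strictly decreasing run from $h_k$ to $\ell_k$ passes through $h_k,h_k-1,\ldots,\ell_k$. Concatenating these runs over $k=1,2,\ldots,r$, and using that the profile already pins down the endpoints of each run, reproduces exactly the displayed sequence.

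The part requiring care is bookkeeping rather than genuine difficulty: one must verify that the retained indices alternate maxima and minima, with no two maxima (or two minima) adjacent, and handle the boundary runs (the initial rise from $\ell_0=0$ to $h_1$ and the final descent from $h_r$ to $\ell_r=0$) as special cases of the same interpolation. The alternation is immediate from the sign-switch description, and the endpoints are forced by $e_0=e_{2n}=0$, so no essential obstacle arises; this is why the paper declares the proof routine.
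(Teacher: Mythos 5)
Your proposal is correct, and it is exactly the routine argument the paper has in mind (the paper explicitly omits the proof as routine): unit steps $e_i-e_{i-1}=\overline a_i\in\{\pm 1\}$, the retained interior entries of the profile being precisely the turning points, and strict monotone unit-step interpolation between consecutive turning points, which forces each run to pass through every intermediate integer. Your bookkeeping of the alternation of maxima and minima and of the boundary runs from $\ell_0=0$ and to $\ell_r=0$ is sound, so nothing is missing.
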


\noindent The length of a Catalan word is determined by its
profile in the following way.

\begin{lemma} For a Catalan word $a_1a_2\cdots a_{2n}$ with
profile
$(\ell_0, h_1, \ell_1, h_2,  \ldots, h_{r},\ell_{r})$, 
\begin{align*}
n = \sum_{i=1}^r (h_i - \ell_i).
\end{align*}
\end{lemma}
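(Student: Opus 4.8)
The plan is to prove that for a Catalan word $w=a_1a_2\cdots a_{2n}$ with profile $(\ell_0, h_1, \ell_1, h_2, \ldots, h_r, \ell_r)$, the length is $n = \sum_{i=1}^r (h_i - \ell_i)$. I would reduce this to a direct count using the previous lemma (Lemma~\ref{lem:elevation}), which expresses the elevation sequence explicitly in terms of the profile. The key observation is that the length $2n$ of the word equals the number of steps in its elevation sequence, i.e., $2n = \sum_{i=0}^{2n-1} |e_{i+1}-e_i|$, since each step changes the elevation by exactly $\pm 1$.

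First I would note that the elevation sequence, as given in Lemma~\ref{lem:elevation}, decomposes into alternating monotone runs: it ascends from $\ell_0=0$ to $h_1$, descends to $\ell_1$, ascends to $h_2$, descends to $\ell_2$, and so on, finishing with a descent from $h_r$ to $\ell_r=0$. Each maximal monotone run between consecutive profile entries contributes a number of unit steps equal to the absolute difference of its endpoints. Thus the total number of steps is
\begin{align*}
2n = \sum_{i=1}^r (h_i - \ell_{i-1}) + \sum_{i=1}^r (h_i - \ell_i),
\end{align*}
where the first sum counts the ascending runs (from $\ell_{i-1}$ up to $h_i$) and the second counts the descending runs (from $h_i$ down to $\ell_i$).

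Next I would simplify this telescoping expression. Since $\ell_0 = \ell_r = 0$ (the word is Catalan, hence balanced, so the elevation sequence starts and ends at $0$), the sum $\sum_{i=1}^r (h_i - \ell_{i-1})$ differs from $\sum_{i=1}^r (h_i - \ell_i)$ only through the $\ell$-terms, whose difference telescopes to $\ell_r - \ell_0 = 0$. Hence the two sums are equal, giving $2n = 2\sum_{i=1}^r (h_i - \ell_i)$, and dividing by $2$ yields the claim. The main obstacle, such as it is, is purely bookkeeping: one must be careful that the runs are indeed maximal and nonoverlapping and that no unit step is double-counted at the profile entries themselves, but this follows immediately from the explicit form in Lemma~\ref{lem:elevation}. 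Since the excerpt already declares the proofs of these profile lemmas routine and omits them, I would present this as a short verification rather than a detailed argument.
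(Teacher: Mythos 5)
Your proof is correct. The paper gives no proof of this lemma at all --- it is one of the three profile lemmas declared ``routine, and omitted'' --- so your argument simply fills in the intended verification: using Lemma~\ref{lem:elevation}, the elevation sequence splits into alternating maximal ascending and descending runs, the step count is $\sum_{i=1}^r (h_i-\ell_{i-1}) + \sum_{i=1}^r (h_i-\ell_i) = 2n$, and the telescoping identity $\sum_{i=1}^r(\ell_i - \ell_{i-1}) = \ell_r - \ell_0 = 0$ forces the two sums to be equal. A marginally quicker route, if you want one: the descending unit steps are exactly the letters $y$, of which a balanced word of length $2n$ has precisely $n$, and the total descent is $\sum_{i=1}^r (h_i - \ell_i)$; but your version is equally valid and no less routine.
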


\noindent Next we clarify which sequences
are the profile of a Catalan word.   

\begin{lemma}
\label{lem:profC}
Given $r \in \mathbb N$ and a sequence of integers
\begin{align}
(\ell_0,h_1, \ell_1, h_2, \ldots, h_{r},\ell_{r}).
\label{ex:hell}
\end{align}
There exists a Catalan word $w$ with profile 
{\rm (\ref{ex:hell})} if and only if the following 
{\rm (i)--(v)} hold:
\begin{enumerate}
\item[\rm (i)]
$\ell_0 = 0$;
\item[\rm (ii)]
 $\ell_i \geq 0$ $(1 \leq i \leq r-1)$;
\item[\rm (iii)]
$\ell_r = 0$;
\item[\rm (iv)]
$\ell_{i-1} < h_{i}$ $(1 \leq i \leq r)$;
\item[\rm (v)]
$h_i > \ell_{i}$ $(1 \leq i \leq r)$.
\end{enumerate}
In this case
\begin{align*}
w=x^{h_1} y^{h_1-\ell_1} x^{h_2-\ell_1} y^{h_2-\ell_2} \cdots 
x^{h_r-\ell_{r-1}}y^{h_r}.
\end{align*}
(In the above line the exponents are with respect to the
concatentation product).
\end{lemma}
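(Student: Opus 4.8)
The plan is to prove the biconditional by establishing both directions, and then to verify the explicit word formula. For the forward direction, I would start from a Catalan word $w$ with elevation sequence $(e_0,e_1,\ldots,e_{2n})$ and analyze what the deletion procedure in Definition \ref{def:profile} produces. Since $e_{i+1}-e_i = \overline{a_{i+1}} \in \{+1,-1\}$, the elevation sequence is a lattice path of unit up/down steps; the retained terms $e_i$ are exactly the local extrema (the indices where the path changes direction) together with the endpoints $e_0$ and $e_{2n}$. Writing the profile as $(\ell_0,h_1,\ell_1,\ldots,h_r,\ell_r)$, the $h_i$ are the local maxima and the $\ell_i$ the local minima, interleaved. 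From $e_0=0$ I get (i), and from $e_{2n}=0$ I get (iii). Because the $\ell_i$ for $1 \le i \le r-1$ are values of the path, the Catalan (nonnegativity) condition forces (ii). Conditions (iv) and (v) encode the fact that a local maximum strictly exceeds each adjacent local minimum, which holds because between consecutive extrema the path strictly ascends or strictly descends by unit steps.

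For the converse, given a sequence satisfying (i)--(v), I would reconstruct the elevation sequence via Lemma \ref{lem:elevation} — going up by unit steps from $\ell_{i-1}$ to $h_i$, then down to $\ell_i$ — and check that this reconstruction genuinely produces a Catalan word. Conditions (iv) and (v) guarantee each ascending and descending run is nonempty, so the reconstruction is well defined and its profile is exactly the given sequence; (i), (iii) give the correct endpoints; and (ii) together with the fact that the minima are the lowest points on each valley yields nonnegativity of all partial sums, i.e. the Catalan property. The reconstructed word is balanced since it returns to $0$. This direction essentially just runs Lemma \ref{lem:elevation} and confirms compatibility, so it is largely bookkeeping.

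Finally, for the explicit formula $w=x^{h_1}y^{h_1-\ell_1}x^{h_2-\ell_1}y^{h_2-\ell_2}\cdots x^{h_r-\ell_{r-1}}y^{h_r}$, I would read off the letters from the reconstructed elevation sequence: an up-step (adding $+1$, i.e. contributing $\overline{a}=1$) corresponds to the letter $x$, and a down-step corresponds to $y$. The ascent from $\ell_{i-1}$ to $h_i$ is a block of $h_i-\ell_{i-1}$ copies of $x$, and the descent from $h_i$ to $\ell_i$ is a block of $h_i-\ell_i$ copies of $y$; concatenating these blocks for $i=1,\ldots,r$ and using $\ell_0=0$, $\ell_r=0$ gives precisely the displayed word. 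I would verify the first and last exponents specialize correctly ($x^{h_1-\ell_0}=x^{h_1}$ and $y^{h_r-\ell_r}=y^{h_r}$).

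I expect the main subtlety to lie in the forward direction: carefully justifying that the profile-deletion rule retains exactly the local extrema and endpoints, and that conditions (ii), (iv), (v) are forced rather than merely consistent. In particular, one must be careful that consecutive retained entries are genuinely alternating minima and maxima (so that the $h_i$, $\ell_i$ labeling is legitimate), which relies on the unit-step structure of elevation sequences of Catalan words. The remaining steps are routine translations between lattice paths and words, which is why the excerpt flags the proofs of these lemmas as routine and omits them.
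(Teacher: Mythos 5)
Your proof is correct, and it supplies exactly the routine argument the paper has in mind but omits (the paper explicitly skips the proofs of Lemmas 2.11--2.13 as routine): translate the Catalan word into its unit-step elevation path, observe that the profile retains precisely the endpoints and the alternating local maxima/minima (whence (i)--(v)), and conversely reconstruct the path as in Lemma \ref{lem:elevation}, reading up-steps as $x$ and down-steps as $y$ to obtain the displayed word. Your flagged subtlety---that the retained entries genuinely alternate, beginning and ending with maxima flanked by $\ell_0=0$ and $\ell_r=0$---is the right point to check, and it follows from nonnegativity of the partial sums forcing the first step up and the last step down.
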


\begin{definition}\rm By a {\it Catalan profile} we mean
a sequence of integers 
$(\ell_0,h_1, \ell_1, h_2, \ldots, h_{r},\ell_{r})$
that satisfies the conditions (i)--(v) in Lemma
\ref{lem:profC}.
\end{definition}

\noindent Recall the notation
\begin{align*}
\lbrack n \rbrack^{!}_q = 
\lbrack n \rbrack_q  
\lbrack n-1 \rbrack_q  
\cdots 
\lbrack 2 \rbrack_q  
\lbrack 1 \rbrack_q
\qquad \qquad n \in \mathbb N.
\end{align*}
\noindent We interpret $\lbrack 0 \rbrack^{!}_q = 1$.
\medskip

\noindent Let $w$ denote a Catalan word. Next we give the
coefficient $C(w)$ in terms of the profile of $w$.

\begin{lemma}
\label{lem:Cform}
For a Catalan word $w$ with 
profile $(\ell_0,h_1, \ell_1, h_2, \ldots, h_{r}, \ell_{r})$,
\begin{align}
C(w) 
 =
 \frac{
\lbrack h_1 \rbrack^!_q 
\lbrack h_2 \rbrack^!_q 
\cdots 
\lbrack h_r \rbrack^!_q 
}{
\lbrack \ell_0 \rbrack^!_q 
\lbrack \ell_1 \rbrack^!_q 
\cdots 
\lbrack \ell_{r} \rbrack^!_q 
}
\times
\frac{
\lbrack  h_1+1 \rbrack^!_q 
\lbrack h_2+1 \rbrack^!_q 
\cdots 
\lbrack h_r+1 \rbrack^!_q 
}{
\lbrack \ell_0+1 \rbrack^!_q 
\lbrack \ell_1+1 \rbrack^!_q 
\cdots 
\lbrack \ell_{r}+1 \rbrack^!_q 
}.
\label{eq:Cw}
\end{align}
\end{lemma}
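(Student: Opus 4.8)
The plan is to express $C(w)$ as a single product running over the elevation sequence of $w$, and then evaluate that product by breaking it into the ascending and descending runs prescribed by Lemma \ref{lem:elevation}. First I would observe that, writing $(e_0,e_1,\ldots,e_{2n})$ for the elevation sequence of $w=a_1a_2\cdots a_{2n}$ (so that $e_0=0$ and $e_i=\overline a_1+\cdots+\overline a_i$), the defining formula (\ref{eq:Cform}) for $C(w)$ reads simply
\[
C(w)=\prod_{i=0}^{2n}\lbrack 1+e_i\rbrack_q .
\]
Thus the whole computation reduces to understanding, with multiplicity, the values taken by the elevation sequence.

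Next I would invoke Lemma \ref{lem:elevation}, which displays the elevation sequence as a concatenation of $r$ ascending runs and $r$ descending runs, where the $j$-th ascending run climbs from $\ell_{j-1}$ to $h_j$ and the $j$-th descending run falls from $h_j$ to $\ell_j$. Reading off the values contributed by each run, and assigning each shared peak $h_j$ to the ascending run and each shared valley $\ell_j$ to the descending run so that nothing is double counted, the contribution of the $j$-th ascending run to the product is $\lbrack h_j+1\rbrack^!_q/\lbrack \ell_{j-1}+1\rbrack^!_q$ and that of the $j$-th descending run is $\lbrack h_j\rbrack^!_q/\lbrack \ell_j\rbrack^!_q$. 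Each of these is a telescoping product of consecutive $q$-integers, so the evaluations are immediate from the definition of $\lbrack\,\cdot\,\rbrack^!_q$; the boundary cases ($j=1$ ascending and $j=r$ descending) work out because $\lbrack \ell_0+1\rbrack^!_q=\lbrack 1\rbrack^!_q=1$ and $\lbrack \ell_r\rbrack^!_q=\lbrack 0\rbrack^!_q=1$.

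Multiplying the $2r$ contributions together then gives
\[
C(w)=\frac{\prod_{j=1}^r \lbrack h_j+1\rbrack^!_q}{\prod_{j=1}^r \lbrack \ell_{j-1}+1\rbrack^!_q}\cdot\frac{\prod_{j=1}^r \lbrack h_j\rbrack^!_q}{\prod_{j=1}^r \lbrack \ell_j\rbrack^!_q}.
\]
To match this against the asserted formula (\ref{eq:Cw}), I would reindex the two $\ell$-denominators and reconcile their endpoints using $\ell_0=\ell_r=0$ (conditions (i),(iii) of Lemma \ref{lem:profC}) together with $\lbrack 0\rbrack^!_q=\lbrack 1\rbrack^!_q=1$: the first denominator runs over $0\le i\le r-1$, and adjoining the value-one factor $\lbrack \ell_r+1\rbrack^!_q=\lbrack 1\rbrack^!_q$ extends it to $0\le i\le r$; the second runs over $1\le i\le r$, and adjoining the value-one factor $\lbrack \ell_0\rbrack^!_q=\lbrack 0\rbrack^!_q$ extends it to $0\le i\le r$. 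This produces (\ref{eq:Cw}) exactly. As a consistency check one can also confirm that the run lengths sum to $2n+1$, the number of entries in the elevation sequence.

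I expect the only genuine obstacle to be the bookkeeping in the middle step: correctly identifying the value ranges of the individual runs and, in particular, assigning each peak and valley to exactly one run so that no consecutive $q$-integer is over- or under-counted. Once the runs are separated cleanly, the remainder is a mechanical telescoping followed by the matching of boundary factorials described above.
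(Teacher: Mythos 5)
Your proof is correct and takes essentially the same route as the paper, whose entire proof is the one-liner ``use the definition of $C(w)$ together with Lemma \ref{lem:elevation}''; you have simply made the telescoping bookkeeping explicit. One harmless slip: under your peak/valley assignment the runs cover only $2n$ of the $2n+1$ elevation entries (the initial entry $e_0=\ell_0=0$ belongs to no run), so your stated consistency check is off by one --- but this does not affect the result, since that entry contributes the factor $\lbrack 1\rbrack_q=1$.
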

\begin{proof} 
Use (\ref{eq:Cform}) and
Lemma
\ref{lem:elevation}, keeping in mind that
$\overline x = 1$ and $\overline y=-1$.
\end{proof}

\noindent The next definition is for notational convenience; it
is justified by Lemma
\ref{lem:Cform}.

\begin{definition}
\label{def:Cform}
\rm For $r \in \mathbb N$ and 
any sequence of natural numbers
$(\ell_0, h_1, \ell_1, h_2, \ldots, h_r, \ell_r)$
define
\begin{align*}
C(\ell_0, h_1, \ell_1, h_2, \ldots, h_r, \ell_r)=
 \frac{
\lbrack h_1 \rbrack^!_q 
\lbrack h_2 \rbrack^!_q 
\cdots 
\lbrack h_r \rbrack^!_q 
}{
\lbrack \ell_0 \rbrack^!_q 
\lbrack \ell_1 \rbrack^!_q 
\cdots 
\lbrack \ell_{r} \rbrack^!_q 
}
\times
\frac{
\lbrack  h_1+1 \rbrack^!_q 
\lbrack h_2+1 \rbrack^!_q 
\cdots 
\lbrack h_r+1 \rbrack^!_q 
}{
\lbrack \ell_0+1 \rbrack^!_q 
\lbrack \ell_1+1 \rbrack^!_q 
\cdots 
\lbrack \ell_{r}+1 \rbrack^!_q 
}.
\end{align*}
\end{definition}

\noindent We mention two identities for later use.
For $n \in \mathbb N$,
\begin{align}
\sum_{t=1}^n 
\lbrack 2t \rbrack_q
= \lbrack n \rbrack_q \lbrack n+1 \rbrack_q.
\label{lem:geom}
\end{align}
For $r,s \in \mathbb N$ with $r<s$,
\begin{align}
\sum_{t=r+1}^s 
\lbrack 2t \rbrack_q
=
\lbrack s \rbrack_q \lbrack s+1 \rbrack_q-
\lbrack r \rbrack_q \lbrack r+1 \rbrack_q.
\label{lem:geom2}
\end{align}

\begin{proposition}
\label{prop:tech1} For a Catalan profile
$(\ell_0,h_1, \ell_1, h_2, \ldots,h_{r}, \ell_{r})$ with $r\geq 1$,
\begin{align*}
&
C(\ell_0,h_1, \ell_1, h_2,  \ldots,h_{r}, \ell_{r})
\\
&= 
\sum_{i=\xi}^{r-1}
C(\ell_0,h_1, \ell_1, \ldots, h_i, \ell_i, h_{i+1}-1, \ell_{i+1}-1,
\ldots, \ell_{r-1}-1,h_r-1,\ell_r)
\times \Biggl( \sum_{t={\ell_i+1}}^{h_{i+1}} \lbrack 2t\rbrack_q \Biggr).
\end{align*}
where 
\begin{align*}
\xi = {\rm max} \lbrace i | 0 \leq i \leq r-1, \;\; \ell_i = 0\rbrace.
\end{align*}
\end{proposition}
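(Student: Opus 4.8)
The plan is to factor the scalar $C=C(\ell_0,h_1,\ell_1,\ldots,h_r,\ell_r)$ out of both sides and reduce the claim to a scalar identity proved by telescoping. For $\xi \leq i \leq r-1$ write $C_i$ for the $C$-value of the $i$-th summand, namely $C(\ell_0,h_1,\ldots,h_i,\ell_i,h_{i+1}-1,\ell_{i+1}-1,\ldots,\ell_{r-1}-1,h_r-1,\ell_r)$. First I would check that all these arguments are natural numbers: each $h_j\geq 1$ since $h_j>\ell_j\geq 0$ by Lemma \ref{lem:profC}, while for $j>\xi$ the entry $\ell_j$ is positive by the maximality of $\xi$, so the decremented entries $\ell_j-1$ (which occur only for indices $j\geq i+1>\xi$) stay nonnegative. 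This is exactly why the sum begins at $i=\xi$.

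Next I would compute the ratio $C_i/C$ directly from Definition \ref{def:Cform}. Abbreviating $P_j=\lbrack h_j\rbrack_q\lbrack h_j+1\rbrack_q$ and $Q_j=\lbrack \ell_j\rbrack_q\lbrack \ell_j+1\rbrack_q$, the only modified factorials are $\lbrack h_j\rbrack^!_q\lbrack h_j+1\rbrack^!_q$ for $j\geq i+1$ (each contributing a factor $P_j^{-1}$) and $\lbrack \ell_j\rbrack^!_q\lbrack \ell_j+1\rbrack^!_q$ for $i+1\leq j\leq r-1$ (each contributing a factor $Q_j$), so that
\begin{align*}
\frac{C_i}{C}=\frac{Q_{i+1}Q_{i+2}\cdots Q_{r-1}}{P_{i+1}P_{i+2}\cdots P_r}.
\end{align*}
Condition (iv) of Lemma \ref{lem:profC} gives $\ell_i<h_{i+1}$, so identity (\ref{lem:geom2}) applies and rewrites the inner sum as $\sum_{t=\ell_i+1}^{h_{i+1}}\lbrack 2t\rbrack_q=P_{i+1}-Q_i$. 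Hence the $i$-th term on the right-hand side equals $C$ times $\tfrac{C_i}{C}(P_{i+1}-Q_i)$.

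The key observation is that this telescopes. Setting
\begin{align*}
T_i=\frac{Q_iQ_{i+1}\cdots Q_{r-1}}{P_{i+1}P_{i+2}\cdots P_r},
\end{align*}
one checks, by cancelling $P_{i+1}$ in the first part and absorbing $Q_i$ into the numerator of the second part, that $\tfrac{C_i}{C}(P_{i+1}-Q_i)=T_{i+1}-T_i$. Summing over $\xi\leq i\leq r-1$ yields $T_r-T_\xi$. Finally $T_r=1$ (empty products), while $T_\xi=0$ because $\ell_\xi=0$ forces $Q_\xi=\lbrack 0\rbrack_q\lbrack 1\rbrack_q=0$. Therefore the right-hand side equals $C(T_r-T_\xi)=C$, which is the left-hand side.

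I expect the only genuine work to be the index bookkeeping in the ratio $C_i/C$ and the correct guess of $T_i$ that makes the sum collapse; once $T_i$ is identified, evaluating the boundary terms is immediate. The two points that must not be overlooked are the appeal to condition (iv) to legitimize (\ref{lem:geom2}) and the role of $\xi$ in keeping every factorial argument nonnegative.
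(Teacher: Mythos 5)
Your proposal is correct and follows essentially the same route as the paper: both evaluate the two sides via Definition \ref{def:Cform} together with the identities (\ref{lem:geom}), (\ref{lem:geom2}), the paper leaving the rest as ``straightforward algebraic manipulation.'' Your telescoping sum $T_i$ (with $T_\xi=0$ forced by $\ell_\xi=0$ and $T_r=1$) is a clean, explicit packaging of exactly that manipulation, and your checks on nonnegativity of the decremented entries and on the applicability of (\ref{lem:geom2}) via condition (iv) are the right points to verify.
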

\begin{proof} Evaluate the left-hand side using
Definition \ref{def:Cform}.
Evaluate the right-hand side using
Definition \ref{def:Cform}
and (\ref{lem:geom}),
(\ref{lem:geom2}). After some straightforward algebraic
manipulation the two sides are found to be equal.
\end{proof}

\begin{corollary}
\label{cor:aver}
For $n\geq 1$ and
 $w \in {\rm Cat}_n$ we have 
\begin{align}
\label{eq:needit}
C(w) = q \sum_{v \in {\rm Cat}_{n-1}} C(v) 
 \Bigl(
\frac{q x \star (vy) - q^{-1} (vy)\star x}{q-q^{-1}} 
,w \Bigr).
\end{align}
\end{corollary}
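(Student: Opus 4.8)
The plan is to prove Corollary~\ref{cor:aver} by combining Lemma~\ref{lem:balw}, Lemma~\ref{lem:com}, and Proposition~\ref{prop:tech1}, working entirely in terms of profiles. Fix $n\geq 1$ and a Catalan word $w\in{\rm Cat}_n$. First I would unwind the right-hand side of (\ref{eq:needit}). By Lemma~\ref{lem:balw}, for each $v\in{\rm Cat}_{n-1}$ the inner product $\bigl(\tfrac{qx\star(vy)-q^{-1}(vy)\star x}{q-q^{-1}},w\bigr)$ equals $q^{-1}\sum_i[2+2\overline a_1+\cdots+2\overline a_i]_q$, summed over those $i$ with $0\leq i\leq 2(n-1)$ for which inserting $x$ after position $i$ in $vy$ produces exactly $w$. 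Crucially, only $v$ that are Catalan contribute, and for those $v$ the inserted word $a_1\cdots a_i\,x\,a_{i+1}\cdots a_{2(n-1)}\,y$ is automatically Catalan (this is the content of Lemma~\ref{lem:com}); so every $w$ arising this way is indeed Catalan, consistent with $w\in{\rm Cat}_n$. The factor $q$ in front of the sum in (\ref{eq:needit}) cancels the $q^{-1}$ from Lemma~\ref{lem:balw}.

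The key translation step is to reinterpret ``which $v\in{\rm Cat}_{n-1}$ yield $w$ by a single $x$-insertion (followed by the trailing $y$)'' in the language of profiles. Let $w$ have profile $(\ell_0,h_1,\ell_1,\ldots,h_r,\ell_r)$. Deleting one $x$ from $w$ to obtain a shorter Catalan word $v$ (with its trailing $y$ stripped) corresponds, at the level of elevation sequences and hence profiles, to lowering one of the ascending runs: the $v$ that contribute are exactly those whose profile is $(\ell_0,h_1,\ell_1,\ldots,h_i,\ell_i,h_{i+1}-1,\ell_{i+1}-1,\ldots,\ell_{r-1}-1,h_r-1,\ell_r)$ for $i$ ranging over $\xi\leq i\leq r-1$, with $\xi=\max\{i:0\leq i\leq r-1,\ \ell_i=0\}$ as in Proposition~\ref{prop:tech1}. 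For each such $v$, summing the weights $[2+2\overline a_1+\cdots+2\overline a_i]_q$ over the admissible insertion positions produces, via Lemma~\ref{lem:elevation} together with the identities (\ref{lem:geom}), (\ref{lem:geom2}), exactly the factor $\sum_{t=\ell_i+1}^{h_{i+1}}[2t]_q$. I would verify this weight computation directly: along the ascending run from height $\ell_i$ up to $h_{i+1}$ in $w$, the elevation after the inserted $x$ at the relevant position takes values whose doubled-plus-two $q$-integers are precisely the terms $[2t]_q$ for $t$ in that range.

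Having made this identification, the right-hand side of (\ref{eq:needit}) becomes $\sum_{i=\xi}^{r-1}C(v_i)\bigl(\sum_{t=\ell_i+1}^{h_{i+1}}[2t]_q\bigr)$, where $v_i$ is the profile-lowered word just described and $C(v_i)=C(\ell_0,h_1,\ldots,h_i,\ell_i,h_{i+1}-1,\ell_{i+1}-1,\ldots,h_r-1,\ell_r)$ by Lemma~\ref{lem:Cform} and Definition~\ref{def:Cform}. This is verbatim the right-hand side of Proposition~\ref{prop:tech1}, which asserts it equals $C(\ell_0,h_1,\ldots,h_r,\ell_r)=C(w)$. So the corollary follows immediately once the profile bookkeeping is in place.

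The main obstacle I anticipate is the combinatorial bijection in the middle paragraph: carefully checking that deleting an $x$ from $w$ (to land in ${\rm Cat}_{n-1}$) corresponds exactly to the family of profiles appearing in Proposition~\ref{prop:tech1}, with the correct range $\xi\leq i\leq r-1$, and that the summed insertion weights collapse to $\sum_{t=\ell_i+1}^{h_{i+1}}[2t]_q$. The lower limit $\xi$ is the delicate point: an insertion position is valid only when the resulting shorter word is still Catalan (never dipping below zero), and the constraint $\ell_i=0$ for the \emph{last} such index below $i$ is what pins down $\xi$; one must confirm that lowering the tail by one when some intermediate $\ell_j=0$ with $j\geq\xi$ would violate Lemma~\ref{lem:profC}(ii), so those terms are correctly excluded. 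The remaining algebra is routine given (\ref{lem:geom}), (\ref{lem:geom2}) and Lemma~\ref{lem:elevation}.
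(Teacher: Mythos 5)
Your proposal is correct and is essentially the paper's own argument: the paper's proof consists of the single remark that the corollary is a reformulation of Proposition \ref{prop:tech1} via Lemma \ref{lem:balw}, and your profile bookkeeping (deleting/re-inserting an $x$ within a single run of $w$, the insertion weights along that run summing to $\sum_{t=\ell_i+1}^{h_{i+1}}\lbrack 2t\rbrack_q$, and the Catalan constraint on the lowered tail forcing $i\geq\xi$) is exactly that reformulation made explicit. One minor point to note when writing this up: if $h_{i+1}=\ell_i+1$, the sequence $(\ell_0,\ldots,h_i,\ell_i,h_{i+1}-1,\ell_{i+1}-1,\ldots,h_r-1,\ell_r)$ is not literally the profile of $v_i$ (adjacent equal entries merge), but since Definition \ref{def:Cform} is defined on arbitrary sequences of natural numbers and the extra factors cancel, your identification of $C(v_i)$ with this expression still stands.
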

\begin{proof} This is a reformulation of
Proposition
\ref{prop:tech1}, using
Lemma \ref{lem:balw}.
\end{proof}

\begin{proposition}
\label{prop:ww}
For $n\geq 1$,
\begin{align}
q^{-1} C_n = \frac{qx \star (C_{n-1}y)-q^{-1} (C_{n-1}y)\star x}{q-q^{-1}}.
\label{eq:step4a}
\end{align}
\end{proposition}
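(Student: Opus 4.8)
The plan is to verify (\ref{eq:step4a}) by comparing the two sides coefficientwise in the standard basis, exploiting the nondegenerate, symmetric bilinear form $(\,,\,)$ with respect to which the standard basis is orthonormal. Since two elements of $\mathbb V$ agree precisely when they have the same inner product against every word, it suffices to show
\begin{align*}
\Bigl(q^{-1}C_n, w\Bigr)
=
\Bigl(\frac{qx \star (C_{n-1}y)-q^{-1} (C_{n-1}y)\star x}{q-q^{-1}}, w\Bigr)
\end{align*}
for every word $w$ in $\mathbb V$. The left-hand inner product is easy: by (\ref{lem:short}) and orthonormality, $(C_n, w) = C(w)$ if $w \in {\rm Cat}_n$, and $(C_n, w) = 0$ otherwise.

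First I would expand the right-hand side. Writing $C_{n-1} = \sum_{v \in {\rm Cat}_{n-1}} v\,C(v)$ via (\ref{lem:short}) and using bilinearity of both $\star$ and the form, the inner product of the right-hand side with $w$ becomes
\begin{align*}
\sum_{v \in {\rm Cat}_{n-1}} C(v)
\Bigl(\frac{q x \star (vy) - q^{-1} (vy)\star x}{q-q^{-1}}, w\Bigr).
\end{align*}
I would then split into two cases according to whether or not $w$ is Catalan.

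In the case $w \in {\rm Cat}_n$, Corollary \ref{cor:aver} applies verbatim: it states exactly that $C(w)$ equals $q$ times the displayed sum, so the sum equals $q^{-1}C(w)$, which matches $(q^{-1}C_n, w)$. In the case where $w$ is not Catalan, I would argue that every summand vanishes. For each fixed $v \in {\rm Cat}_{n-1}$ the word $v$ is Catalan, so Lemma \ref{lem:com} (whose hypotheses are met because $v$ is Catalan and $w$ is not) shows that the inner product $\bigl(\frac{q x \star (vy)-q^{-1}(vy)\star x}{q-q^{-1}}, w\bigr)$ is zero; equivalently, via Lemma \ref{lem:balw}, the only words that can contribute are $a_1\cdots a_i\,x\,a_{i+1}\cdots a_m\,y$, each of which is Catalan and hence cannot equal $w$. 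Thus the entire sum is zero, matching $(q^{-1}C_n, w)=0$.

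Having matched the two sides against every word $w$, nondegeneracy of the form forces the two elements of $\mathbb V$ to be equal, establishing (\ref{eq:step4a}). I do not expect a genuine obstacle here, since the substantive combinatorial identity has already been packaged into Proposition \ref{prop:tech1} and reformulated as Corollary \ref{cor:aver}; the only point requiring care is the non-Catalan case, where one must confirm that Lemma \ref{lem:com} indeed annihilates every term, so that the right-hand side is supported precisely on ${\rm Cat}_n$ just as $C_n$ is.
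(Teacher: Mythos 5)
Your proposal is correct and follows essentially the same route as the paper's own proof: the identical inner-product strategy, with Corollary \ref{cor:aver} covering the case $w \in {\rm Cat}_n$ and Lemma \ref{lem:com} forcing the right-hand side to vanish against non-Catalan $w$. The only point to tighten is your case split, which should be $w \in {\rm Cat}_n$ versus $w \notin {\rm Cat}_n$ rather than ``Catalan'' versus ``not Catalan,'' since a Catalan word $w$ of length other than $2n$ falls through both of your cases as stated; that leftover case is immediate because, by Lemma \ref{lem:balw}, every word supporting the right-hand side has length $2n$, so both sides pair to zero with such a $w$.
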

\begin{proof} 
For all words $w$  in $\mathbb V$, we show that each side of
(\ref{eq:step4a}) 
has the same inner product with $w$.
For the left-hand side of
(\ref{eq:step4a}),
 this inner product is $q^{-1} C(w)$ if
$w \in {\rm Cat}_n$, and 0 if $w \not\in {\rm Cat}_n$.
For the right-hand side of
(\ref{eq:step4a}),  this inner product is
\begin{align}
\sum_{v \in {\rm Cat}_{n-1}} C(v) 
 \Bigl(
\frac{q x \star (vy) - q^{-1} (vy)\star x}{q-q^{-1}} 
,w \Bigr).
\label{eq:almost}
\end{align}
First assume that $w \in {\rm Cat}_n$.
Then
the scalar (\ref{eq:almost}) is equal to
$q^{-1}C(w)$, in view of Corollary
\ref{cor:aver}. Next assume that $w \not\in {\rm Cat}_n$.
Then the scalar (\ref{eq:almost}) is equal to 0 by construction and
Lemma
\ref{lem:com}.
In any case, each side of (\ref{eq:step4a}) 
has the same inner product with $w$.
The result follows.
\end{proof}

\begin{proposition}
\label{prop:wwdual}
For $n\geq 1$,
\begin{align}
q^{-1} C_n = \frac{
q  (xC_{n-1})\star y
- 
q^{-1} y \star (x C_{n-1}) 
}{q-q^{-1}}.
\label{eq:step5a}
\end{align}
\end{proposition}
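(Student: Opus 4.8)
The plan is to deduce (\ref{eq:step5a}) directly from the previously established identity (\ref{eq:step4a}) of Proposition \ref{prop:ww}, by applying the antiautomorphism $\zeta$ of the $q$-shuffle algebra $\mathbb V$. The relevant features of $\zeta$ have all been recorded above: it is $\mathbb F$-linear, it swaps $x \leftrightarrow y$, it fixes $C_n$ for every $n \in \mathbb N$, and it acts as an antiautomorphism with respect to both the concatenation product and the $\star$ product. In particular $\zeta(a \star b) = \zeta(b) \star \zeta(a)$ and $\zeta(ab) = \zeta(b)\,\zeta(a)$ (concatenation) for all $a,b \in \mathbb V$, the latter by linearity from its action on words.

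First I would apply $\zeta$ to the left-hand side of (\ref{eq:step4a}). Since $\zeta$ is linear and fixes $C_n$, we obtain $\zeta(q^{-1}C_n) = q^{-1}C_n$, which is exactly the left-hand side of (\ref{eq:step5a}). Next I would apply $\zeta$ to the right-hand side of (\ref{eq:step4a}), handling the two $\star$-products separately. The crucial auxiliary computation is $\zeta(C_{n-1}y) = \zeta(y)\,\zeta(C_{n-1}) = xC_{n-1}$, which uses that $\zeta$ reverses concatenation, sends $y\mapsto x$, and fixes $C_{n-1}$. Granting this, the antiautomorphism property of $\star$ gives $\zeta\bigl(x \star (C_{n-1}y)\bigr) = \zeta(C_{n-1}y)\star \zeta(x) = (xC_{n-1})\star y$ and $\zeta\bigl((C_{n-1}y)\star x\bigr) = \zeta(x)\star \zeta(C_{n-1}y) = y \star (xC_{n-1})$.

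Assembling these, the image under $\zeta$ of the right-hand side of (\ref{eq:step4a}) is precisely $\bigl(q\,(xC_{n-1})\star y - q^{-1}\, y \star (xC_{n-1})\bigr)/(q-q^{-1})$, which is the right-hand side of (\ref{eq:step5a}); equating the two images then yields the claim. I do not expect a genuine obstacle here: the proof is a pure symmetry argument that completely sidesteps re-running the inner-product calculation of Proposition \ref{prop:ww}. The only point demanding care is the bookkeeping of how $\zeta$ interchanges the two product structures, and in particular the verification that $\zeta(C_{n-1}y) = xC_{n-1}$ (and not $yC_{n-1}$ or $C_{n-1}x$), which hinges on $\zeta$ being a concatenation-\emph{anti}automorphism that fixes $C_{n-1}$.
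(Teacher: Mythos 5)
Your proposal is correct and is exactly the paper's proof: the paper also derives (\ref{eq:step5a}) by applying the antiautomorphism $\zeta$ to each side of (\ref{eq:step4a}). Your write-up simply makes explicit the bookkeeping (that $\zeta$ fixes $C_n$, reverses both products, and sends $C_{n-1}y \mapsto xC_{n-1}$) that the paper leaves to the reader.
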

\begin{proof}  Apply the antiautomorphism $\zeta$ to each side of
(\ref{eq:step4a}).
\end{proof}

\begin{proposition}
\label{prop:last}
For $n\geq 1$,
\begin{align}
& xC_n = 
\frac{ (xC_{n-1})\star (xy)- (xy) \star (x C_{n-1})}{q-q^{-1}}.
\label{eq:step2f}
\end{align}
\end{proposition}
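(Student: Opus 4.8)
The plan is to prove (\ref{eq:step2f}) by peeling the leading letter $x$ off of both factors and reducing to Proposition \ref{prop:wwdual} plus one commutation fact. Every word occurring in $C_{n-1}$ begins with $x$, so I would write $xC_{n-1}$ as a linear combination of words $xv'$, where each $v'$ is Catalan of length $2(n-1)$, hence balanced. Applying the expansion (\ref{eq:uvcirc}) to peel the first letter of the \emph{first} factor in $(xC_{n-1})\star(xy)$, and of the \emph{first} factor in $(xy)\star(xC_{n-1})$, every resulting word begins with $x$. The only point requiring care is the exponent: for a word $u=xv'$ appearing in $xC_{n-1}$ one has $\sum_j\langle u_j,x\rangle = 2n+(-2)(n-1)=2$, so the relevant power of $q$ is $q^{2}$.

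Carrying this out and subtracting, the leading $x$ factors out and I expect to obtain
\begin{align*}
&(xC_{n-1})\star(xy)-(xy)\star(xC_{n-1})
\\
&\qquad = x\bigl(q^{2}(xC_{n-1})\star y - y\star(xC_{n-1})\bigr)
+ x\bigl(C_{n-1}\star(xy)-(xy)\star C_{n-1}\bigr).
\end{align*}
Rewriting Proposition \ref{prop:wwdual}, that is (\ref{eq:step5a}), as $q^{2}(xC_{n-1})\star y - y\star(xC_{n-1})=(q-q^{-1})C_n$, the first term on the right equals $(q-q^{-1})xC_n$. Dividing by $q-q^{-1}$, the identity (\ref{eq:step2f}) is therefore \emph{equivalent} to the residual commutation
\begin{align*}
C_{n-1}\star(xy) = (xy)\star C_{n-1}.
\end{align*}

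The residual commutation is the main obstacle; the peeling is routine once the $q$-powers are checked. When $n=1$ it is immediate, since $C_0=1$ is the multiplicative identity. For $n\geq 2$ I would resolve it through the bridge to $U^+_q$ rather than by a bare shuffle computation. At the stage of the induction where (\ref{eq:step2f}) is invoked, the image formula (\ref{eq:main2}) at the smaller index $n-1$, and the base case for $E_\delta$, are already established via Propositions \ref{prop:ww} and \ref{prop:wwdual}; these identify $C_{n-1}$ with a nonzero scalar multiple of $\natural(E_{(n-1)\delta})$ and $xy=C_1/\lbrack 2\rbrack_q$ with a nonzero scalar multiple of $\natural(E_\delta)$. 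Since $\natural$ is an algebra homomorphism into the $q$-shuffle algebra $\mathbb V$, and since $E_{(n-1)\delta}$ and $E_\delta$ commute in $U^+_q$ (both lie among the mutually commuting $\lbrace E_{m\delta}\rbrace_{m=1}^\infty$ of Damiani recalled below (\ref{eq:dam2intro})), their images commute under $\star$, which is exactly $C_{n-1}\star(xy)=(xy)\star C_{n-1}$. I would be careful to order the induction so that (\ref{eq:main2}) at index $n-1$ is in hand before this use, so that the argument stays non-circular and does not rely on the eventual Corollary \ref{cor:com}.

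A purely internal alternative would mirror the proof of Proposition \ref{prop:ww}: pair each side of (\ref{eq:step2f}) against an arbitrary word $w$ using the orthonormal form $(\,,\,)$, apply a vanishing step of the type of Lemma \ref{lem:com} to discard all $w$ not of the form $xw'$ with $w'$ Catalan, and match the surviving coefficients to $C(w')$ by a recurrence in the style of Proposition \ref{prop:tech1}. The cost of this route is computing the coefficient of $w$ in the shuffle commutator $(xv)\star(xy)-(xy)\star(xv)$, which inserts \emph{two} letters with nontrivial crossing weights rather than the single insertion handled by Lemma \ref{lem:bal}; organizing that expansion is the crux, and is why I would prefer the reduction above.
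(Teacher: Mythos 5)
Your proposal is correct and follows essentially the same route as the paper: the same peeling of the leading $x$ via (\ref{eq:uvcirc}) (with the same exponent check giving $q^2$ and $q^0$), the same reduction to Proposition \ref{prop:wwdual} plus the residual commutation $C_{n-1}\star(xy)=(xy)\star C_{n-1}$, and the same resolution of that commutation through $\natural$, Damiani's mutual commutativity of the $E_{m\delta}$, and induction on $n$ using (\ref{eq:main2}) at lower indices. Your explicit attention to keeping the induction non-circular matches the paper's implicit structure.
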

\begin{proof} Using 
(\ref{eq:uvcirc})
we obtain
\begin{align}
(xC_{n-1})\star (xy) &= 
x \bigl(C_{n-1}\star (xy)\bigr)
+ x\bigl((x C_{n-1}) \star y\bigr) q^2,
\label{eq:26a}
\\
(xy)\star (xC_{n-1}) &= 
x\bigl(y \star (x C_{n-1})\bigr)
+
x \bigl((xy)\star C_{n-1}\bigr).
\label{eq:26b}
\end{align}
\noindent
We claim that
\begin{align}
(xy)\star C_{n-1} = C_{n-1}\star (xy).
\label{eq:ind}
\end{align}
If $n=1$ then 
(\ref{eq:ind}) holds since $C_0=1$, so assume $n\geq 2$.
Below
 (\ref{eq:dam2intro}) 
we mentioned that
$E_{\delta}$,
$E_{2\delta}$,
$E_{3\delta},\ldots $ mutually commute.
So by (\ref{eq:main2}) and induction on $n$,
the elements $C_1, C_2, \ldots, C_{n-1}$ mutually commute
with respect to the $q$-shuffle product.
In particular $C_1, C_{n-1}$ commute with respect to the
$q$-shuffle product. The element $C_1$ 
is a nonzero scalar multiple of $xy$, so
(\ref{eq:ind}) holds and the claim is proved.
Using
(\ref{eq:step5})
 and
(\ref{eq:26a}),
(\ref{eq:26b}), 
 (\ref{eq:ind})
we obtain
\begin{align*}
x C_n & = 
 q x \, \frac{q 
 (xC_{n-1})\star y-q^{-1} y\star (xC_{n-1})}{q-q^{-1}}
\\
&
=
\frac{ (xC_{n-1})\star (xy)- (xy) \star (x C_{n-1})}{q-q^{-1}}.
\end{align*}
\end{proof}

\begin{proposition}
For $n \geq 1$,
\begin{align}
 C_ny = \frac{ 
(xy) \star ( C_{n-1}y)
-
(C_{n-1} y)\star (xy)
}{q-q^{-1}},
\label{eq:step3done}
\end{align}
\end{proposition}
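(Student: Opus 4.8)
The plan is to obtain (\ref{eq:step3done}) from (\ref{eq:step2f}) by applying the antiautomorphism $\zeta$, exactly as Proposition \ref{prop:wwdual} was obtained from Proposition \ref{prop:ww}. Recall from the discussion following the statement of Theorem \ref{thm:mainres} that $\zeta$ is simultaneously an antiautomorphism of the free algebra $\mathbb V$ and of the $q$-shuffle algebra $\mathbb V$, that it swaps $x$ and $y$, and that it fixes $C_n$ for all $n\in\mathbb N$. Since Proposition \ref{prop:last} has already established (\ref{eq:step2f}) for all $n\geq 1$, it suffices to track how each term transforms under $\zeta$.

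First I would compute the $\zeta$-images of the relevant words and products. Because $\zeta$ is an antiautomorphism of the free algebra, $\zeta(xC_n)=\zeta(C_n)\zeta(x)=C_ny$ and $\zeta(xC_{n-1})=C_{n-1}y$, while $\zeta(xy)=\zeta(y)\zeta(x)=xy$. Because $\zeta$ is an antiautomorphism of the $q$-shuffle algebra, it reverses each $\star$-product, so $\zeta\bigl((xC_{n-1})\star(xy)\bigr)=\zeta(xy)\star\zeta(xC_{n-1})=(xy)\star(C_{n-1}y)$ and likewise $\zeta\bigl((xy)\star(xC_{n-1})\bigr)=(C_{n-1}y)\star(xy)$. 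Applying the $\mathbb F$-linear map $\zeta$ to both sides of (\ref{eq:step2f}) and dividing through by the $\zeta$-invariant scalar $q-q^{-1}$ then yields precisely (\ref{eq:step3done}).

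The one point requiring care --- and the only place the argument could go wrong --- is that the two products appearing in (\ref{eq:step2f}) are of different kinds: $xC_n$, $xC_{n-1}$, and $C_{n-1}y$ involve the concatenation product, whereas the $\star$ symbols denote the $q$-shuffle product. The argument works only because $\zeta$ is an antiautomorphism for both structures at once, a fact recorded in the paragraph preceding Lemma \ref{lem:bal}. Once this is kept in mind, the computation is entirely mechanical, and no further estimates or identities are needed.
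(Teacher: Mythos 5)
Your proposal is correct and is exactly the paper's own proof: the paper also obtains (\ref{eq:step3done}) by applying the antiautomorphism $\zeta$ to both sides of (\ref{eq:step2f}), relying on the facts that $\zeta$ swaps $x,y$, fixes each $C_n$, and reverses both the concatenation and $q$-shuffle products. Your explicit tracking of each term under $\zeta$ simply spells out what the paper leaves as a one-line argument.
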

\begin{proof} 
Apply the antiautomorphism $\zeta$ to each side of
(\ref{eq:step2f}).
\end{proof}

\noindent We have shown that
(\ref{eq:step2})--(\ref{eq:step5}) hold for $n\geq 1$.
Theorem 
\ref{thm:mainres} is now proven.

\section{Some relations involving the Catalan elements}

\noindent In Corollary 
\ref{cor:com} we saw that the Catalan elements
mutually commute with respect to the $q$-shuffle product.
In this section we give some more relations along this line.
First we recall some relations in the algebra $U^+_q$.

\begin{lemma}
\label{lem:dam2}
{\rm (See \cite[p.~307]{damiani}.)}
For $i,j \in \mathbb N$ the following holds in $U_q^+$:
\begin{align*}
E_{i\delta+\alpha_1}
E_{j\delta+\alpha_0}
=
q^2
E_{j\delta+\alpha_0}
E_{i\delta+\alpha_1}
+
q^2 E_{(i+j+1)\delta}.
\end{align*}
\end{lemma}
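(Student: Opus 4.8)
The plan is to transport the identity through the injective algebra homomorphism $\natural : U^+_q \to \mathbb V$ and to prove it inside the $q$-shuffle algebra, where the recurrences of Section~2 are available. Since $\natural$ is injective and carries the product of $U^+_q$ to the $q$-shuffle product $\star$, the asserted relation is equivalent to the one obtained by applying $\natural$ to both sides and invoking Theorem~\ref{thm:mainres} to substitute the closed forms for $\natural(E_{i\delta+\alpha_1})$, $\natural(E_{j\delta+\alpha_0})$, $\natural(E_{(i+j+1)\delta})$. The common scalar $q^{-2(i+j)}(q-q^{-1})^{2(i+j)}$ then cancels, and one is reduced to proving, for all $i,j\in\mathbb N$,
\begin{align}
(C_iy)\star(xC_j)=q^2\,(xC_j)\star(C_iy)-(q-q^{-1})\,C_{i+j+1}.
\label{eq:reduced}
\end{align}

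I would establish (\ref{eq:reduced}) by induction on $j$, with the first index $i$ left arbitrary. For the base case $j=0$ we have $C_0=1$, so (\ref{eq:reduced}) reads $(C_iy)\star x=q^2\,x\star(C_iy)-(q-q^{-1})C_{i+1}$; this is precisely (\ref{eq:step4}) with $n=i+1$ after clearing the denominator and rearranging, so nothing new is needed.

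For the inductive step, fix $j\geq 1$ and assume (\ref{eq:reduced}) at second index $j-1$ for every value of the first index. Using Proposition~\ref{prop:last} I would replace $xC_j$ by $(q-q^{-1})^{-1}\bigl((xC_{j-1})\star(xy)-(xy)\star(xC_{j-1})\bigr)$, so that $(q-q^{-1})(C_iy)\star(xC_j)$ becomes a difference of triple $\star$-products. Into these I would feed the hypothesis at $(i,j-1)$ and at $(i+1,j-1)$; the relation $(C_iy)\star(xy)=(xy)\star(C_iy)-(q-q^{-1})C_{i+1}y$, which is the $n=i+1$ case of (\ref{eq:step3done}); and the commutativity $C_{i+j}\star(xy)=(xy)\star C_{i+j}$ furnished by Corollary~\ref{cor:com} (since $xy$ is a scalar multiple of $C_1$). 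When the dust settles, the terms $(xC_{j-1})\star(C_{i+1}y)$ cancel in pairs, the two $C_{i+j}$ terms cancel by commutativity, and the surviving combination $(xC_{j-1})\star(xy)-(xy)\star(xC_{j-1})$ recombines through Proposition~\ref{prop:last} into $(q-q^{-1})xC_j$, leaving exactly $q^2(q-q^{-1})(xC_j)\star(C_iy)-(q-q^{-1})^2C_{i+j+1}$; dividing by $q-q^{-1}$ gives (\ref{eq:reduced}).

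I expect the only real difficulty to be organizational: the inductive step generates about half a dozen triple $\star$-products, and the auxiliary identities (\ref{eq:step3done}), (\ref{eq:step2f}) together with the hypothesis must be applied in the right order for every correction term to find its partner. The one ingredient beyond Section~2 is the commutativity of the Catalan elements, and I would confirm there is no circularity: Corollary~\ref{cor:com} follows from Theorem~\ref{thm:mainres} together with Damiani's independent result that the $\lbrace E_{n\delta}\rbrace$ commute, neither of which uses the present lemma.
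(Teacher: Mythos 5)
Your proposal is correct, but it takes a genuinely different route from the paper: the paper offers no proof of Lemma \ref{lem:dam2} at all --- it is quoted from Damiani --- and Section 3 runs the logic in the opposite direction, using Damiani's relation together with Theorem \ref{thm:mainres} to produce the shuffle-algebra identity $q^{-1}C_{i+j+1} = \bigl( q\,(xC_i)\star (C_jy)-q^{-1}\,(C_j y)\star (x C_i)\bigr)/(q-q^{-1})$ (the first corollary of Section 3), which is precisely your reduced identity after relabeling the indices. You reverse this: you prove that identity directly, by induction on $j$ using the Section 2 machinery, and then pull it back through the injective homomorphism $\natural$. I verified the induction: the base case is (\ref{eq:step4}) with $n=i+1$, rearranged; in the inductive step, expanding $xC_j$ by Proposition \ref{prop:last}, then applying associativity of $\star$, the hypothesis at $(i,j-1)$ and at $(i+1,j-1)$, the $n=i+1$ case of (\ref{eq:step3done}), and the commutation $(xy)\star C_{i+j}=C_{i+j}\star (xy)$ from Corollary \ref{cor:com}, the $(xC_{j-1})\star(C_{i+1}y)$ terms and the $C_{i+j}$ terms cancel exactly as you describe, and Proposition \ref{prop:last} reassembles what remains into $q^2(q-q^{-1})\,(xC_j)\star(C_iy)-(q-q^{-1})^2\,C_{i+j+1}$. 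Your non-circularity check is the essential point, and it is accurate: the proof of Theorem \ref{thm:mainres} uses only Damiani's recurrences (\ref{eq:dam1intro})--(\ref{eq:dam2intro}), the relation (\ref{eq:dam3intro}), and the mutual commutativity of the $E_{n\delta}$, never Lemma \ref{lem:dam2}; the same is true of Corollary \ref{cor:com}. As for what each approach buys: the paper's citation keeps Section 3 a pure translation exercise, transporting Damiani's relations into $\mathbb V$, whereas your argument makes the statement self-contained within this paper and shows that the combinatorial realization can re-derive, not merely transport, Damiani's relations --- at the price of a bookkeeping-heavy induction.
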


\begin{lemma}
\label{lem:com2}
{\rm (See \cite[p.~304]{damiani}.)}
For $i\geq 1$ and $j\geq 0$ the following hold in $U^+_q$:
\begin{align*}
&E_{i\delta} E_{j\delta+\alpha_0} =
E_{j\delta+ \alpha_0} E_{i\delta}
+ q^{2-2i}(q+q^{-1}) E_{(i+j)\delta+\alpha_0}
\\
& \qquad \qquad \qquad \qquad \qquad  \qquad -\;
q^2(q^2-q^{-2})\sum_{\ell=1}^{i-1}
q^{-2\ell}
E_{(j+\ell) \delta+\alpha_0}
E_{(i-\ell) \delta},
\\
&
E_{j\delta+\alpha_1}
E_{i\delta}
=
E_{i\delta}
E_{j\delta+ \alpha_1}
+ q^{2-2i}(q+q^{-1}) E_{(i+j)\delta+\alpha_1}
\\
& \qquad \qquad \qquad \qquad \qquad \qquad -\;
q^2(q^2-q^{-2})\sum_{\ell=1}^{i-1}
q^{-2\ell}
E_{(i-\ell) \delta}
E_{(j+\ell) \delta+\alpha_1}.
\end{align*}
\end{lemma}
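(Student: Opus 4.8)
The plan is to prove the first displayed identity and then obtain the second at no extra cost. The antiautomorphism $\zeta$ of $U^+_q$ fixes each $E_{n\delta}$, interchanges $E_{n\delta+\alpha_0}\leftrightarrow E_{n\delta+\alpha_1}$, and reverses products. Applying $\zeta$ to the first identity sends $E_{i\delta}E_{j\delta+\alpha_0}\mapsto E_{j\delta+\alpha_1}E_{i\delta}$, sends $E_{j\delta+\alpha_0}E_{i\delta}\mapsto E_{i\delta}E_{j\delta+\alpha_1}$, sends $E_{(i+j)\delta+\alpha_0}\mapsto E_{(i+j)\delta+\alpha_1}$, and sends each product $E_{(j+\ell)\delta+\alpha_0}E_{(i-\ell)\delta}\mapsto E_{(i-\ell)\delta}E_{(j+\ell)\delta+\alpha_1}$. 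This is exactly the second identity, so it suffices to establish the first.

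For the first identity I would fix $i$ and reduce to the boundary case $j=0$ using the inner derivation $D=[E_\delta,\ \cdot\ ]$ of $U^+_q$. Since $\lbrace E_{n\delta}\rbrace_{n\geq 1}$ mutually commute (see below (\ref{eq:dam2intro})) we have $D(E_{n\delta})=0$, while (\ref{eq:dam1intro}) gives $D(E_{j\delta+\alpha_0})=(q+q^{-1})E_{(j+1)\delta+\alpha_0}$. Writing the first identity in commutator form as $[E_{i\delta},E_{j\delta+\alpha_0}]=\text{RHS}_j$ and applying $D$, the Jacobi identity on the left (using $[E_\delta,E_{i\delta}]=0$) and the Leibniz rule on the right carry both sides to exactly $(q+q^{-1})$ times the corresponding sides of the identity with $j$ replaced by $j+1$. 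Hence, once the identity is known for $j=0$, it follows for all $j\geq 0$ by induction on $j$, dividing by $q+q^{-1}\neq 0$.

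It then remains to prove the $j=0$ instance, namely
\[
[E_{i\delta},E_{\alpha_0}] = q^{2-2i}(q+q^{-1})E_{i\delta+\alpha_0} - q^2(q^2-q^{-2})\sum_{\ell=1}^{i-1}q^{-2\ell}E_{\ell\delta+\alpha_0}E_{(i-\ell)\delta},
\]
by induction on $i$. The base case $i=1$ is precisely (\ref{eq:dam1intro}) with $n=1$, the sum being empty. For the inductive step I would substitute the recursion (\ref{eq:dam2intro}) for $E_{(i+1)\delta}$ (recall $A=E_{\alpha_0}$ from (\ref{eq:BA})) into $[E_{(i+1)\delta},E_{\alpha_0}]$, push the real root vector $E_{i\delta+\alpha_1}$ past $E_{\alpha_0}$ using Lemma \ref{lem:dam2}, and then rewrite the resulting products of imaginary root vectors via their mutual commutativity together with the inductive hypothesis.

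The hard part will be this last step. Because $E_{(i+1)\delta}$ is itself defined by (\ref{eq:dam2intro}) in terms of $E_{i\delta+\alpha_1}$ and $A$, and because the $j=0$ case of Lemma \ref{lem:dam2} is essentially that same defining relation, a naive expansion of $[E_{(i+1)\delta},E_{\alpha_0}]$ collapses to a tautology and yields no information. Breaking this circularity requires feeding in genuinely new data, in practice the commutation rules among the same-type real root vectors $\lbrace E_{n\delta+\alpha_0}\rbrace$, and running the induction on $i$ jointly with those rules and with Lemma \ref{lem:dam2}. This bookkeeping is controlled by the convexity of Damiani's PBW order (\ref{eq:order}) and is exactly the content of the cited computation in \cite{damiani}. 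An alternative, purely combinatorial route would apply the injective homomorphism $\natural$ and verify the images of both sides directly in the $q$-shuffle algebra using the closed forms of Theorem \ref{thm:mainres}; this is logically available once Section 2 is in hand, but it appears no less laborious than the inductive argument above.
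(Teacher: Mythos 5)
The first thing to note is that the paper contains no proof of this lemma: it is imported verbatim from Damiani \cite[p.~304]{damiani}, so there is no internal argument to compare against, and any complete blind proof would necessarily do more than the paper does. Your two reduction steps are correct and verifiable from what the paper provides. Applying $\zeta$ (which fixes each $E_{n\delta}$, swaps $E_{n\delta+\alpha_0}\leftrightarrow E_{n\delta+\alpha_1}$, and reverses products) does carry the first identity term-by-term onto the second. The induction on $j$ via $D=[E_\delta,\,\cdot\,]$ is also sound: by the Jacobi identity, the commutativity $[E_\delta,E_{i\delta}]=0$, the Leibniz rule, and (\ref{eq:dam1intro}), applying $D$ to the identity for $(i,j)$ produces exactly $(q+q^{-1})$ times the identity for $(i,j+1)$, and $q+q^{-1}\neq 0$ since $q$ is not a root of unity. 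So the lemma genuinely reduces to the case $j=0$.

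The gap is that the case $j=0$ is never proved. Your own diagnosis of why your proposed induction on $i$ fails is accurate: expanding $[E_{(i+1)\delta},A]$ via (\ref{eq:dam2intro}) and pushing $E_{i\delta+\alpha_1}$ past $A$ with Lemma \ref{lem:dam2} (whose $j=0$ instance is just the defining recursion (\ref{eq:dam2intro}) rearranged) collapses to the tautology $[E_{(i+1)\delta},A]=[E_{(i+1)\delta},A]$ and yields nothing. At that point you hand the remaining work back to ``the cited computation in \cite{damiani},'' which means the proposal establishes strictly less than the lemma: modulo your reductions, the $j=0$ identity for all $i\geq 1$ is assumed, not derived. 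Since the paper itself treats the lemma as a citation, the practical difference between your treatment and the paper's is small, and your reductions are a genuine (if inessential) addition; but as a standalone proof the proposal is incomplete. Closing it would require either Damiani's joint induction, in which the imaginary-real relations are proved simultaneously with the real-real relations (the paper's Lemma \ref{lem:com3}) using the convexity of the PBW order (\ref{eq:order}), or the alternative you mention: apply the injective map $\natural$ and verify the identity of images directly in the $q$-shuffle algebra via Theorem \ref{thm:mainres}, a computation you also do not carry out.
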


\begin{lemma}
\label{lem:com3}
{\rm (See \cite[p.~300]{damiani}.)}
For  $i> j\geq 0$ the following hold in $U^+_q$.
\begin{enumerate}
\item[\rm (i)] Assume that $i-j=2r+1$ is odd. Then
\begin{align*}
&
E_{i\delta+\alpha_0}
E_{j\delta+\alpha_0}
=
q^{-2}
E_{j\delta+\alpha_0}
E_{i\delta+\alpha_0}
-
(q^2-q^{-2})\sum_{\ell=1}^{r}
q^{-2\ell}
E_{(j+\ell) \delta+\alpha_0}
E_{(i-\ell) \delta+\alpha_0},
\\
&
E_{j\delta+\alpha_1}
E_{i\delta+\alpha_1} =
q^{-2}
E_{i\delta+\alpha_1 }
E_{j\delta+\alpha_1 }
-
(q^2-q^{-2})\sum_{\ell=1}^{r}
q^{-2\ell}
E_{(i-\ell) \delta+\alpha_1}
E_{(j+\ell) \delta+\alpha_1}.
\end{align*}
\item[\rm (ii)] Assume that $i-j=2r$ is even. Then
\begin{align*}
E_{i\delta+\alpha_0}
E_{j\delta+\alpha_0}
 =
q^{-2}
E_{j\delta+\alpha_0}
&E_{i\delta+\alpha_0}
-
q^{j-i+1} (q-q^{-1}) E^2_{(r+j)\delta+\alpha_0}
\\
&-\;
(q^2-q^{-2})\sum_{\ell=1}^{r-1}
q^{-2\ell}
E_{(j+\ell) \delta+\alpha_0}
E_{(i-\ell) \delta+\alpha_0},
\\
E_{j\delta+\alpha_1}
E_{i\delta+\alpha_1} =
q^{-2}
E_{i\delta+\alpha_1 }
&
E_{j\delta+\alpha_1 }
-
q^{j-i+1} (q-q^{-1}) E^2_{(r+j)\delta+\alpha_1}
\\
&-\;
(q^2-q^{-2})\sum_{\ell=1}^{r-1}
q^{-2\ell}
E_{(i-\ell) \delta+\alpha_1}
E_{(j+\ell) \delta+\alpha_1}.
\end{align*}
\end{enumerate}
\end{lemma}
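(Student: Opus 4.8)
The plan is to recognize Lemma \ref{lem:com3} as the Levendorskii--Soibelman straightening relation for the convex PBW order (\ref{eq:order}), and to indicate how it is derived from the recursion (\ref{eq:dam1intro}), the mutual commutativity of $\lbrace E_{n\delta}\rbrace_{n=1}^\infty$, and Lemma \ref{lem:com2}. The first reduction is to pass from two root families to one. Applying the antiautomorphism $\zeta$, which reverses products, fixes each $E_{m\delta}$, and swaps $E_{n\delta+\alpha_0}\leftrightarrow E_{n\delta+\alpha_1}$, carries each $\alpha_0$ relation in (i), (ii) term by term onto the corresponding $\alpha_1$ relation. Hence it suffices to establish the $\alpha_0$ relations. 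For brevity write $a_n = E_{n\delta+\alpha_0}$ and $d_m=E_{m\delta}$, so that $(q+q^{-1})a_n = \lbrack d_1, a_{n-1}\rbrack$ for $n\geq 1$, and the $i=1$ instance of Lemma \ref{lem:com2} gives $\lbrack d_1, a_p\rbrack = (q+q^{-1})a_{p+1}$, whence $\lbrack d_1, a_pa_{p'}\rbrack = (q+q^{-1})(a_{p+1}a_{p'}+a_pa_{p'+1})$.

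The main induction is on the difference $k=i-j\geq 1$. Substituting $a_i = \lbrack d_1, a_{i-1}\rbrack/(q+q^{-1})$ into both $a_ia_j$ and $a_ja_i$ and moving $d_1$ to the correct side using $d_1a_j = a_jd_1 + (q+q^{-1})a_{j+1}$, one obtains the clean reduction
\begin{align*}
a_i a_j - q^{-2} a_j a_i
= \frac{\lbrack d_1,\, a_{i-1}a_j - q^{-2}a_j a_{i-1}\rbrack}{q+q^{-1}}
- \bigl(a_{i-1}a_{j+1}-q^{-2}a_{j+1}a_{i-1}\bigr).
\end{align*}
The two bracketed quantities are exactly the ``commutator forms'' for the pairs $(i-1,j)$ and $(i-1,j+1)$, of difference $k-1$ and $k-2$; by the induction hypothesis each is an explicit sum of products $a_pa_{p'}$ with fixed index sum $i+j-1$, respectively $i+j-2$. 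One then applies the derivation $\lbrack d_1,\cdot\rbrack$ termwise, which raises the index sum back to $i+j$, and collects the resulting $q$-powers. The squared term $E^2_{(r+j)\delta+\alpha_0}$ of case (ii) should emerge precisely from the diagonal product $a_{r+j}a_{r+j}$ created when the two branches of $\lbrack d_1,\cdot\rbrack$ meet at the middle index, which also explains why only the even case carries such a term.

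The genuine difficulty, and the reason the statement is quoted from Damiani \cite[p.~300]{damiani} rather than reproved, lies in the base cases. The case $k=1$ asserts $a_{j+1}a_j = q^{-2}a_j a_{j+1}$; for $j=0$ this is immediate, since expanding $E_{\delta+\alpha_0}$ via (\ref{eq:BA}) and (\ref{eq:dam1intro}) shows that $E_{\delta+\alpha_0}A - q^{-2}AE_{\delta+\alpha_0}$ is a nonzero scalar multiple of the left-hand side of the $q$-Serre relation (\ref{eq:S1}), hence zero. However, the reduction above, specialized to $k=1$, turns out to be a tautology, so it does \emph{not} propagate the relation from $j$ to $j+1$; the general-$j$ base case instead rests on the deeper convex-PBW structure underlying Damiani's construction (the root vectors being translates under Lusztig's braid automorphism, which commutes with the $q^{-2}$-relation). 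I therefore expect the main obstacle to be exactly this: once the full family of base cases is granted, the inductive step is a matter of careful bookkeeping of the geometric sums $\sum_\ell q^{-2\ell}$ produced by iterated use of Lemma \ref{lem:com2}, matching the parity split between (i) and (ii), and checking that the boundary contributions assemble into the coefficient $q^{j-i+1}(q-q^{-1})$ of the middle squared term.
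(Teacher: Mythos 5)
The paper does not prove this lemma at all: as the parenthetical in the statement indicates, it is imported verbatim from \cite[p.~300]{damiani}, just like Lemmas \ref{lem:dam2} and \ref{lem:com2}; the paper only proves \emph{consequences} of these quoted relations (Lemmas \ref{lem:altxx} and \ref{lem:dam1}). So the comparison here is between your sketch and a bare citation, and the honest conclusion is that you end up in essentially the same place as the author, after some correct supplementary analysis.

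On its own merits, your sketch is sound exactly as far as you claim and stalls exactly where you say it stalls. The $\zeta$-reduction from the $\alpha_1$ relations to the $\alpha_0$ relations is valid, since $\zeta$ reverses products, fixes each $E_{m\delta}$, and swaps the two families term by term. The identity $\lbrack d_1,a_p\rbrack=(q+q^{-1})a_{p+1}$ is indeed the $i=1$ instance of Lemma \ref{lem:com2}, and your displayed reduction of the pair $(i,j)$ to the pairs $(i-1,j)$ and $(i-1,j+1)$ checks out by direct computation. Granted the base cases, the bookkeeping does reproduce the stated coefficients: for $i-j=2$ your reduction gives
$a_{j+2}a_j-q^{-2}a_ja_{j+2}=-(1-q^{-2})a_{j+1}^2$,
which matches part (ii) with $r=1$, and for $i-j=3$ it gives $-(q^2-q^{-2})q^{-2}a_{j+1}a_{j+2}$, matching part (i) with $r=1$. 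But, as you observe, at $i-j=1$ the reduction is a tautology, so the relations $a_{j+1}a_j=q^{-2}a_ja_{j+1}$ for $j\geq 1$ (precisely the $r=0$ instances of part (i), i.e.\ (\ref{eq:spcase1})) are unreachable by this route; only $j=0$ follows from the $q$-Serre relation (\ref{eq:S1}). Deferring to Damiani at exactly this point is therefore a genuine gap in your argument as a standalone proof, but it is the same deferral the paper makes for the entire lemma. One correction: Damiani's treatment of these base cases is not via Lusztig's braid automorphisms --- her root vectors are defined by the recursions (\ref{eq:dam1intro}), (\ref{eq:dam2intro}) and the relations of her Section 4 are established by a lengthy simultaneous induction --- so the ``deeper structure'' you gesture at is not quite the right one, though this does not affect the logic of your reduction.
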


\noindent We mention an alternate version of
Lemma \ref{lem:com2}.

\begin{lemma}
\label{lem:altxx}
For $i,j\in \mathbb N$ the following hold in $U^+_q$:
\begin{align*}
&
E_{i\delta+\alpha_0} E_{(j+1)\delta} -
E_{(j+1)\delta} E_{i\delta+\alpha_0}
=
q^2 
E_{(i+1)\delta+\alpha_0} E_{j\delta} -
q^{-2} 
E_{j\delta} E_{(i+1)\delta+\alpha_0},
\\
&
E_{(j+1)\delta} E_{i\delta+\alpha_1}
-
E_{i\delta+\alpha_1} E_{(j+1)\delta} 
=
q^2 
E_{j\delta} E_{(i+1)\delta+\alpha_1}
-
q^{-2} 
E_{(i+1)\delta+\alpha_1} E_{j\delta}.
\end{align*}
\end{lemma}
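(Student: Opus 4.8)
The plan is to derive both displayed identities from Lemma~\ref{lem:com2}, which already expresses each commutator $[E_{m\delta},E_{k\delta+\alpha_0}]=E_{m\delta}E_{k\delta+\alpha_0}-E_{k\delta+\alpha_0}E_{m\delta}$ (for $m\geq 1$, $k\geq 0$) as an explicit sum. I would prove only the first line (the $\alpha_0$ version) by hand and then obtain the second line (the $\alpha_1$ version) for free: applying the antiautomorphism $\zeta$ of $U^+_q$ from Section~2, which fixes each $E_{n\delta}$, interchanges $E_{n\delta+\alpha_0}$ with $E_{n\delta+\alpha_1}$, and reverses every product, transforms the first displayed line of Lemma~\ref{lem:altxx} exactly into the second.

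For the $\alpha_0$ identity, abbreviate $T_{m,k}=[E_{m\delta},E_{k\delta+\alpha_0}]$. The left-hand side equals $-T_{j+1,i}$. For the right-hand side, I would use $E_{j\delta}E_{(i+1)\delta+\alpha_0}=E_{(i+1)\delta+\alpha_0}E_{j\delta}+T_{j,i+1}$ to rewrite $q^2E_{(i+1)\delta+\alpha_0}E_{j\delta}-q^{-2}E_{j\delta}E_{(i+1)\delta+\alpha_0}=(q^2-q^{-2})E_{(i+1)\delta+\alpha_0}E_{j\delta}-q^{-2}T_{j,i+1}$. Thus for $j\geq 1$ the claim is equivalent to the purely commutator-level identity
\[
q^{-2}T_{j,i+1}-T_{j+1,i}=(q^2-q^{-2})\,E_{(i+1)\delta+\alpha_0}E_{j\delta},
\]
which I would verify by substituting the closed forms of $T_{j,i+1}$ and $T_{j+1,i}$ supplied by Lemma~\ref{lem:com2}. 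Two things occur: the leading single terms $q^{-2j}(q+q^{-1})E_{(i+j+1)\delta+\alpha_0}$ cancel, and after shifting the summation index by one the two remaining sums coincide except for a single boundary term, which is exactly $(q^2-q^{-2})E_{(i+1)\delta+\alpha_0}E_{j\delta}$. This bookkeeping with the summation range and the boundary term is the only real calculation, and I expect the index shift (together with the cancellation of the leading terms) to be the main point to get right.

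The genuinely delicate case is the boundary $j=0$, where Lemma~\ref{lem:com2} cannot be applied to rewrite $T_{0,i+1}$ and where the symbol $E_{0\delta}$ appears on the right but has not been defined. Here I would compute the left-hand side directly from the recurrence (\ref{eq:dam1intro}): since $[E_\delta,E_{i\delta+\alpha_0}]=(q+q^{-1})E_{(i+1)\delta+\alpha_0}$, we get $E_{i\delta+\alpha_0}E_\delta-E_\delta E_{i\delta+\alpha_0}=-(q+q^{-1})E_{(i+1)\delta+\alpha_0}$. This agrees with the right-hand side once one adopts the natural convention $E_{0\delta}=-(q-q^{-1})^{-1}$, a scalar multiple of the identity, under which $q^2E_{(i+1)\delta+\alpha_0}E_{0\delta}-q^{-2}E_{0\delta}E_{(i+1)\delta+\alpha_0}=(q^2-q^{-2})E_{0\delta}E_{(i+1)\delta+\alpha_0}=-(q+q^{-1})E_{(i+1)\delta+\alpha_0}$. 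This convention is forced and consistent, being precisely the $n=0$ specialization of the image formula $E_{n\delta}\mapsto -q^{-2n}(q-q^{-1})^{2n-1}C_n$ in Theorem~\ref{thm:mainres} (note $C_0=1$). With the $\alpha_0$ identity established for all $j\in\mathbb N$, applying $\zeta$ finishes the proof.
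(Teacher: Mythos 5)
Your proposal is correct and takes essentially the same route as the paper: the paper's entire proof is to use Lemma \ref{lem:com2} to rewrite the out-of-order products $E_{(j+1)\delta}E_{i\delta+\alpha_0}$ and $E_{j\delta}E_{(i+1)\delta+\alpha_0}$ and compare, which is exactly the commutator bookkeeping (index shift, cancellation of leading terms, boundary term) that you carry out, with your use of $\zeta$ for the $\alpha_1$ line being a minor shortcut the paper employs elsewhere. Your explicit treatment of $j=0$ via the convention $E_{0\delta}=-(q-q^{-1})^{-1}$ is in fact more careful than the paper, whose statement covers all $j\in\mathbb N$ but which never defines $E_{0\delta}$.
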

\begin{proof}
To verify these equations,
use 
Lemma \ref{lem:com2} to
evaluate each product 
 that is out of order 
with respect to the linear order in Definition
\ref{def:PBWorder}.
\end{proof}

\noindent We mention an alternate version of
Lemma
\ref{lem:com3}.

\begin{lemma}
\label{lem:dam1} The following relations hold in $U^+_q$.
For $i \in \mathbb N$,
\begin{align}
&
q E_{(i+1)\delta+\alpha_0}
E_{i\delta+\alpha_0}
=
q^{-1}
E_{i\delta+\alpha_0}
E_{(i+1)\delta+\alpha_0},
\label{eq:spcase1}
\\
&
q E_{i\delta+\alpha_1}
E_{(i+1)\delta+\alpha_1}
=
q^{-1}
E_{(i+1)\delta+\alpha_1}
E_{i\delta+\alpha_1}.
\label{eq:spcase2}
\end{align}
\noindent For distinct $i,j \in \mathbb N$,
\begin{align*}
q
E_{(i+1)\delta+\alpha_0}
E_{j\delta+\alpha_0}
&-
q^{-1}
E_{j\delta+\alpha_0}
E_{(i+1)\delta+\alpha_0}
\\
&=\;
q^{-1}
E_{i\delta+\alpha_0}
E_{(j+1)\delta+\alpha_0}
-
q
E_{(j+1)\delta+\alpha_0}
E_{i\delta+\alpha_0},
\\
q
E_{j\delta+\alpha_1}
E_{(i+1)\delta+\alpha_1}
&-
q^{-1}
E_{(i+1)\delta+\alpha_1}
E_{j\delta+\alpha_1}
\\
&=\;
q^{-1}
E_{(j+1)\delta+\alpha_1}
E_{i\delta+\alpha_1}
-
q
E_{i\delta+\alpha_1}
E_{(j+1)\delta+\alpha_1}.
\end{align*}
\end{lemma}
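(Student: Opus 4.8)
The plan is to derive everything from Lemma~\ref{lem:com3}, exactly as the surrounding text's phrase ``alternate version of Lemma~\ref{lem:com3}'' suggests. First I would dispose of the two $\alpha_1$ relations by observing that they are the images of the two $\alpha_0$ relations under the antiautomorphism $\zeta$. Since $\zeta$ is $\mathbb F$-linear (so it fixes the scalars $q^{\pm 1}$), reverses the order of products, and swaps $E_{n\delta+\alpha_0}\leftrightarrow E_{n\delta+\alpha_1}$, applying it to (\ref{eq:spcase1}) yields (\ref{eq:spcase2}), and applying it to the displayed $\alpha_0$ identity for distinct $i,j$ yields the corresponding $\alpha_1$ identity. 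Hence it suffices to prove the two $\alpha_0$ relations.

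For (\ref{eq:spcase1}) I would apply Lemma~\ref{lem:com3}(i) to the index pair $(i+1,i)$, whose difference is $1=2\cdot 0+1$. The correction sum $\sum_{\ell=1}^{0}$ is empty, so the lemma gives $E_{(i+1)\delta+\alpha_0}E_{i\delta+\alpha_0}=q^{-2}E_{i\delta+\alpha_0}E_{(i+1)\delta+\alpha_0}$; multiplying by $q$ gives (\ref{eq:spcase1}).

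For the distinct $i,j$ relation, write $X_n=E_{n\delta+\alpha_0}$ for brevity. The asserted identity
\[
q X_{i+1}X_j - q^{-1}X_j X_{i+1} = q^{-1}X_i X_{j+1} - q X_{j+1}X_i
\]
is invariant under interchanging $i$ and $j$, so I may assume $i>j\geq 0$. When $i=j+1$ the right-hand products collapse to $X_{j+1}^2$ and a direct check makes both sides equal $(q^{-1}-q)X_{j+1}^2$; so assume $i>j+1$. Now both $X_{i+1}X_j$ and $X_i X_{j+1}$ are out of order, and I would rewrite each in normal form using Lemma~\ref{lem:com3}: if $i-j$ is odd then the reordering differences $(i+1)-j$ and $i-(j+1)$ are even and part~(ii) applies, producing a square term $X_{r+j+1}^2$, whereas if $i-j$ is even both differences are odd and part~(i) applies, producing no square term.

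Substituting these expansions, the left-hand side loses its two copies of $X_j X_{i+1}$ since $q\cdot q^{-2}-q^{-1}=0$, leaving only correction terms, while on the right-hand side the two copies of $X_{j+1}X_i$ do not cancel but combine to $(q^{-3}-q)X_{j+1}X_i$. The crux is the bookkeeping that reconciles what remains. When present, the square terms match at once, both centered at $r+j+1$ with coefficient $-q^{-2r}(q-q^{-1})$. For the correction sums, the key observation is that the $\ell=1$ term of the left-hand sum is precisely $(q^{-3}-q)X_{j+1}X_i$, matching the uncancelled right-hand contribution; after splitting off this term, the shift $\ell\mapsto \ell+1$ identifies the two remaining sums term-by-term, the factor $(q^2-q^{-2})$ and the various powers of $q$ cancelling as designed. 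I expect this parity-dependent index reconciliation, together with the small boundary values $i-j\in\{1,2\}$ where a correction sum is empty, to be the only delicate point; the rest is routine algebra.
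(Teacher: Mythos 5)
Your proposal is correct and follows essentially the same route as the paper: both derive (\ref{eq:spcase1}), (\ref{eq:spcase2}) from Lemma~\ref{lem:com3}(i) with $r=0$, and for distinct $i,j$ reduce without loss to $i>j$ and use Lemma~\ref{lem:com3} to put the out-of-order products $E_{(i+1)\delta+\alpha_0}E_{j\delta+\alpha_0}$ and $E_{i\delta+\alpha_0}E_{(j+1)\delta+\alpha_0}$ in normal form, after which the two sides agree. Your use of $\zeta$ to deduce the $\alpha_1$ relations from the $\alpha_0$ ones is a minor streamlining (the paper handles both families symmetrically through Lemma~\ref{lem:com3}), and your parity bookkeeping and boundary cases $i-j\in\{1,2\}$ correctly fill in what the paper dismisses as routine.
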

\begin{proof} 
The equations (\ref{eq:spcase1}),
(\ref{eq:spcase2}) come from
Lemma \ref{lem:com3}(i) with $r=0$.
We now verify the remaining two equations in
the lemma statement.
Without loss, we many assume that $i>j$.
Under this assumption use
Lemma \ref{lem:com3} to
evaluate each product 
 that is out of order 
with respect to the linear order in Definition
\ref{def:PBWorder}.
\end{proof}

\noindent We just gave some relations in $U^+_q$. Applying
the map $\natural$ and using Theorem
\ref{thm:mainres} we routinely obtain the following results.

\begin{corollary} 
For $i,j\in \mathbb N$ the following holds in $\mathbb V$:
\begin{align*}
q^{-1}C_{i+j+1} = \frac{ q (xC_i)\star (C_jy)-q^{-1} (C_j y)\star (x C_i)}
{q-q^{-1}}.
\end{align*}
\end{corollary}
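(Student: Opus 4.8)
The plan is to apply the algebra homomorphism $\natural$ to the relation of Lemma \ref{lem:dam2} and read off the answer using Theorem \ref{thm:mainres}. Since the identity of Lemma \ref{lem:dam2} holds for all natural numbers, I would invoke it with the $\alpha_1$-index equal to $j$ and the $\alpha_0$-index equal to $i$, obtaining
\[
E_{j\delta+\alpha_1}E_{i\delta+\alpha_0} = q^2 E_{i\delta+\alpha_0}E_{j\delta+\alpha_1} + q^2 E_{(i+j+1)\delta}
\]
in $U^+_q$. This is the source relation; everything else is transport and bookkeeping.

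First I would apply $\natural$ to both sides. Because $\natural$ is an algebra homomorphism into the $q$-shuffle algebra $\mathbb V$, each concatenation product of PBW basis elements becomes a $\star$-product of their images. Substituting the three relevant images from Theorem \ref{thm:mainres}, namely $E_{i\delta+\alpha_0}\mapsto q^{-2i}(q-q^{-1})^{2i}xC_i$, $E_{j\delta+\alpha_1}\mapsto q^{-2j}(q-q^{-1})^{2j}C_jy$, and $E_{(i+j+1)\delta}\mapsto -q^{-2(i+j+1)}(q-q^{-1})^{2(i+j)+1}C_{i+j+1}$, converts the displayed relation into an identity among the $q$-shuffle products $(C_jy)\star(xC_i)$, $(xC_i)\star(C_jy)$, and the single element $C_{i+j+1}$.

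The next step is the arithmetic of scalars. Every term on both sides carries the common factor $q^{-2i-2j}(q-q^{-1})^{2i+2j}$, which is nonzero and may therefore be cancelled. The term coming from $E_{(i+j+1)\delta}$ is the only one that supplies an extra power of $q-q^{-1}$: after cancelling the common factor it contributes exactly $-(q-q^{-1})C_{i+j+1}$, while the two surviving products appear as $(C_jy)\star(xC_i)$ on the left and $q^2(xC_i)\star(C_jy)$ on the right. Solving the resulting identity for $C_{i+j+1}$ and then multiplying through by $q^{-1}$ produces the stated formula.

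I expect no conceptual obstacle in this argument; the only point demanding care is the exponent arithmetic, specifically checking that $q^2\cdot q^{-2(i+j+1)}(q-q^{-1})^{2(i+j)+1}$ equals $q^{-2i-2j}(q-q^{-1})^{2i+2j}\cdot(q-q^{-1})$, so that the common factor cancels uniformly across all three terms and precisely one factor of $q-q^{-1}$ is left to sit in the denominator of the final expression. Once this is confirmed, the corollary follows directly with no further computation.
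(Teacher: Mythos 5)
Your proposal is correct and is exactly the paper's approach: the paper obtains this corollary by applying $\natural$ to Lemma \ref{lem:dam2} and invoking Theorem \ref{thm:mainres}, which is precisely what you do, with the scalar bookkeeping (including the relabeling of indices and the extra factor of $q-q^{-1}$ from the image of $E_{(i+j+1)\delta}$) worked out correctly.
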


\begin{corollary} 
For
 $i,j\in \mathbb N$ the following hold in $\mathbb V$:
\begin{align*}
&\frac{(xC_i)\star C_j - C_j \star (xC_i)}{q^2-q^{-2}} = 
\sum_{\ell=1}^j q^{2-2\ell} (x C_{i+\ell})\star C_{j-\ell},
\\
&\frac{C_j \star (C_iy)-(C_i y)\star C_j}{q^2-q^{-2}} = 
\sum_{\ell=1}^{j} q^{2-2\ell} C_{j-\ell}\star (C_{i+\ell}y).
\end{align*}
\end{corollary}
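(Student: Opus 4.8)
The plan is to derive both identities by applying the algebra homomorphism $\natural$ to the relations of Lemma~\ref{lem:com2} and evaluating the images with Theorem~\ref{thm:mainres}; I would treat the first identity directly and then obtain the second from it by applying the antiautomorphism $\zeta$. For the first identity, start from the first relation of Lemma~\ref{lem:com2}, with its indices renamed so that the $\delta$-index is $j$ and the $\delta+\alpha_0$-index is $i$; this is valid for $j\geq 1$ and $i\in\mathbb N$. Apply $\natural$ to both sides. Since $\natural$ is an algebra homomorphism, each ordinary product of PBW elements becomes a $\star$-product of their images, and Theorem~\ref{thm:mainres} supplies $E_{n\delta+\alpha_0}\mapsto q^{-2n}(q-q^{-1})^{2n}xC_n$ and $E_{n\delta}\mapsto -q^{-2n}(q-q^{-1})^{2n-1}C_n$.

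The substance of the argument is the scalar bookkeeping. The key observation is that every term on both sides acquires the common factor $q^{-2i-2j}(q-q^{-1})^{2i+2j-1}$; in particular each summand $\natural(E_{(i+\ell)\delta+\alpha_0}E_{(j-\ell)\delta})$ carries a prefactor that does not depend on $\ell$, because the exponents $-2(i+\ell)-2(j-\ell)$ and $2(i+\ell)+2(j-\ell)-1$ are independent of $\ell$. Dividing out this common factor (permissible since $q$ is not a root of unity) and using $(q+q^{-1})(q-q^{-1})=q^2-q^{-2}$ reduces the relation to
\[
(xC_i)\star C_j - C_j\star(xC_i)
= (q^2-q^{-2})\Bigl( q^{2-2j}\,xC_{i+j} + \sum_{\ell=1}^{j-1}q^{2-2\ell}\,(xC_{i+\ell})\star C_{j-\ell}\Bigr).
\]
Dividing by $q^2-q^{-2}$ and recognizing the lone term $q^{2-2j}xC_{i+j}$ as the $\ell=j$ summand (here $C_0=1$, so $(xC_{i+j})\star C_0 = xC_{i+j}$) folds it into the sum and produces exactly the claimed first identity, now with the sum running from $\ell=1$ to $\ell=j$. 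The remaining case $j=0$ is handled separately and holds trivially: with $C_0=1$ the left-hand side is $\bigl((xC_i)-(xC_i)\bigr)/(q^2-q^{-2})=0$ and the sum on the right is empty.

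Finally I would obtain the second identity by applying the antiautomorphism $\zeta$ of the $q$-shuffle algebra $\mathbb V$ to the first. Recall that $\zeta$ fixes each $C_n$, reverses $\star$-products, and sends the concatenation $xC_n\mapsto C_ny$. Hence $(xC_i)\star C_j - C_j\star(xC_i)$ maps to $C_j\star(C_iy)-(C_iy)\star C_j$, and each summand $(xC_{i+\ell})\star C_{j-\ell}$ maps to $C_{j-\ell}\star(C_{i+\ell}y)$, while the scalars are unchanged; this is precisely the second identity. The only step requiring genuine care is the scalar bookkeeping in the second paragraph, and within it the one point worth flagging is the $\ell$-independence of the summand prefactor, which is exactly what allows the stray single term and the truncated sum to combine into a single sum running to $j$.
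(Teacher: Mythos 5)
Your proposal is correct and follows essentially the same route as the paper: the paper's (deliberately terse) proof is precisely to apply $\natural$ to Lemma~\ref{lem:com2}, substitute the images from Theorem~\ref{thm:mainres}, and cancel the common scalar $q^{-2i-2j}(q-q^{-1})^{2i+2j-1}$, with the $(q+q^{-1})$ factor absorbing the extra power of $(q-q^{-1})$ and the lone $xC_{i+j}$ term folding into the sum as the $\ell=j$ summand, exactly as you describe. Your only deviation --- deriving the second identity by applying $\zeta$ to the first rather than applying $\natural$ to the second relation of Lemma~\ref{lem:com2} --- is a harmless variant that the paper itself uses elsewhere (e.g.\ Proposition~\ref{prop:wwdual}).
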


\begin{corollary}
For $i>j\geq 0$ the following hold in $\mathbb V$.
\begin{enumerate}
\item[\rm (i)] Assume that $i-j=2r+1$ is odd. Then
\begin{align*}
&
\frac{
 q (x C_{i})\star (x C_j) - q^{-1} (x C_j)\star (x C_i)}{q^2-q^{-2}} =
-\sum_{\ell=1}^r q^{1-2\ell}(xC_{j+\ell})\star(xC_{i-\ell}),
\\
&
\frac{
 q ( C_{j}y)\star ( C_iy) - q^{-1} ( C_i y)\star ( C_j y)}{q^2-q^{-2}} =
-\sum_{\ell=1}^r q^{1-2\ell}(C_{i-\ell}y)\star(C_{j+\ell}y).
\end{align*}
\item[\rm (ii)]
Assume that  $i-j=2r$ is even. Then
\begin{align*}
&\frac{
 q (x C_i)\star (x C_j) - q^{-1} (x C_j)\star (x C_i)}{q^2-q^{-2}}
 + \frac{q^{j-i+2}(xC_{j+r})\star(xC_{i-r})}{q+q^{-1}} 
\\ 
 & \qquad \qquad \qquad  = -
\sum_{\ell=1}^{r-1} q^{1-2\ell}(xC_{j+\ell})\star(xC_{i-\ell}),
\\
&\frac{
 q ( C_j y)\star ( C_i y) - q^{-1} (C_i y)\star (C_j y)}{q^2-q^{-2}} 
 + \frac{q^{j-i+2}(C_{i-r}y)\star(C_{j+r}y)}{q+q^{-1}}
\\
& \qquad \qquad \qquad 
= -\sum_{\ell=1}^{r-1} q^{1-2\ell}(C_{i-\ell}y)\star(C_{j+\ell}y).
\end{align*}
\end{enumerate}
\end{corollary}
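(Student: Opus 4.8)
The plan is to obtain each displayed identity by applying the algebra homomorphism $\natural$ to the corresponding relation of Lemma \ref{lem:com3} and then substituting the images supplied by Theorem \ref{thm:mainres}. Since $\natural$ is a homomorphism from $U^+_q$ into the $q$-shuffle algebra, every ordinary product $E_a E_b$ on either side becomes a $\star$-product $\natural(E_a)\star \natural(E_b)$, and Theorem \ref{thm:mainres} replaces each $E_{n\delta+\alpha_0}$ by $q^{-2n}(q-q^{-1})^{2n}\,xC_n$. I would carry out the $\alpha_0$ case of part (i) in full detail and then derive everything else from it.

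The key bookkeeping observation, which I would isolate first, is that every product term appearing in a given relation of Lemma \ref{lem:com3} has its two $\delta$-subscripts summing to $i+j$: this holds for $E_{i\delta+\alpha_0}E_{j\delta+\alpha_0}$, for $E_{j\delta+\alpha_0}E_{i\delta+\alpha_0}$, and for each summand $E_{(j+\ell)\delta+\alpha_0}E_{(i-\ell)\delta+\alpha_0}$ since $(j+\ell)+(i-\ell)=i+j$. Consequently $\natural$ sends each such product to $q^{-2(i+j)}(q-q^{-1})^{2(i+j)}$ times a $\star$-product of the form $(xC_a)\star(xC_b)$, so the common scalar $q^{-2(i+j)}(q-q^{-1})^{2(i+j)}$ cancels from the whole identity. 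After cancellation the part (i) relation collapses to
\begin{align*}
(xC_i)\star(xC_j)=q^{-2}(xC_j)\star(xC_i)-(q^2-q^{-2})\sum_{\ell=1}^r q^{-2\ell}(xC_{j+\ell})\star(xC_{i-\ell}).
\end{align*}
Multiplying by $q$ and dividing by $q^2-q^{-2}$ then yields the stated form $\tfrac{q(xC_i)\star(xC_j)-q^{-1}(xC_j)\star(xC_i)}{q^2-q^{-2}}=-\sum_{\ell=1}^r q^{1-2\ell}(xC_{j+\ell})\star(xC_{i-\ell})$.

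For part (ii) the only extra point is the squared term $E^2_{(r+j)\delta+\alpha_0}$: here I would check that $2(r+j)=i+j$ because $i-j=2r$, so $\natural(E^2_{(r+j)\delta+\alpha_0})$ carries exactly the same prefactor $q^{-2(i+j)}(q-q^{-1})^{2(i+j)}$ and thus cancels along with the rest. The surviving identity is then rearranged using $\tfrac{q-q^{-1}}{q^2-q^{-2}}=\tfrac{1}{q+q^{-1}}$, which converts the $q^{j-i+1}(q-q^{-1})$ coefficient of the squared term into the $\tfrac{q^{j-i+2}}{q+q^{-1}}$ coefficient in the statement; the identification $(xC_{r+j})\star(xC_{r+j})=(xC_{j+r})\star(xC_{i-r})$ follows from $i-r=j+r$. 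Finally, I would deduce both $\alpha_1$ equations not by a fresh computation but by applying the antiautomorphism $\zeta$ of the $q$-shuffle algebra to the $\alpha_0$ identities: since $\zeta$ fixes each $C_n$, sends the concatenation $xC_n$ to $C_ny$, and reverses $\star$-products, it transforms $(xC_a)\star(xC_b)$ into $(C_by)\star(C_ay)$, which is precisely the form appearing in the second equation of each part.

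I expect the main obstacle to be purely the exponent bookkeeping of the scalar prefactors rather than anything structural: one must track the interaction of the $q^{-2n}$ factors, the $(q-q^{-1})^{2n}$ factors, and the explicit powers of $q$ in Lemma \ref{lem:com3}, and verify that they conspire to leave a single common factor that divides out cleanly from every term, including the squared term in the even case. Once that balancing is confirmed, the remaining algebra is the routine rearrangement described above and the invocation of $\zeta$.
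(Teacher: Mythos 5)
Your proposal is correct and takes essentially the same approach as the paper, whose proof consists precisely of applying $\natural$ to Lemma \ref{lem:com3} and invoking Theorem \ref{thm:mainres}; the scalar bookkeeping you carry out (the common factor $q^{-2(i+j)}(q-q^{-1})^{2(i+j)}$ cancelling from every term, including the squared term since $2(r+j)=i+j$) is exactly the ``routine'' verification the paper leaves implicit. Your one deviation---obtaining the $\alpha_1$ identities by applying the antiautomorphism $\zeta$ instead of applying $\natural$ to the second relations of Lemma \ref{lem:com3}---is a valid shortcut in the same spirit as the $\zeta$-arguments the paper uses elsewhere.
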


\begin{corollary} For $i,j\in \mathbb N$ the following
hold in $\mathbb V$:
\begin{align*}
&(x C_i)\star C_{j+1} - C_{j+1} \star (xC_i) =
q^2 (x C_{i+1})\star C_{j} - q^{-2} C_{j} \star (xC_{i+1}),
\\
&
C_{j+1}
\star
( C_iy)
- 
(C_iy)
\star
C_{j+1}
=
q^2
C_{j}
\star 
(C_{i+1}y)
- q^{-2}
(C_{i+1}y)
\star
C_{j}.
\end{align*}
\end{corollary}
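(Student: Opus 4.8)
The plan is to derive this corollary from its representation-theoretic source, exactly as the preceding corollaries were derived: apply the algebra homomorphism $\natural : U^+_q \to \mathbb V$ to the two relations of Lemma \ref{lem:altxx}, and then use Theorem \ref{thm:mainres} to replace the images of the PBW generators by Catalan elements. Concretely, I would begin with the first relation of Lemma \ref{lem:altxx},
\[
E_{i\delta+\alpha_0} E_{(j+1)\delta} - E_{(j+1)\delta} E_{i\delta+\alpha_0}
= q^2 E_{(i+1)\delta+\alpha_0} E_{j\delta} - q^{-2} E_{j\delta} E_{(i+1)\delta+\alpha_0},
\]
and apply $\natural$. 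Because $\natural$ is an algebra homomorphism, each product becomes a $q$-shuffle product in $\mathbb V$. Substituting $E_{n\delta+\alpha_0} \mapsto q^{-2n}(q-q^{-1})^{2n} xC_n$ and $E_{n\delta}\mapsto -q^{-2n}(q-q^{-1})^{2n-1} C_n$ from Theorem \ref{thm:mainres}, every one of the four terms acquires a scalar prefactor assembled from powers of $q$ and of $q-q^{-1}$.

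The main work—and essentially the only work—is the bookkeeping of these prefactors. I expect that all four terms share the common factor $(q-q^{-1})^{2i+2j+1}$, which is nonzero since $q$ is not a root of unity and hence may be cancelled. After this cancellation the left-hand side carries a factor $-q^{-2i-2j-2}$ in front of $(xC_i)\star C_{j+1} - C_{j+1}\star(xC_i)$, while the right-hand side reduces to $-q^{-2i-2j}(xC_{i+1})\star C_j + q^{-2i-2j-4} C_j\star(xC_{i+1})$. Multiplying through by the monomial $-q^{2i+2j+2}$ then produces exactly the first displayed identity of the corollary, with the prefactors collapsing to $q^2$ and $-q^{-2}$. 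This step is routine but requires care in tracking exponents; it is the only place where anything could go wrong.

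For the second displayed identity I would avoid recomputing and instead apply the antiautomorphism $\zeta$ of the $q$-shuffle algebra $\mathbb V$ to the first identity. Recall that $\zeta$ swaps $x\leftrightarrow y$, fixes each $C_n$, and satisfies $\zeta(a\star b)=\zeta(b)\star\zeta(a)$; in particular $\zeta(xC_n)=C_ny$, since $\zeta$ reverses the concatenation product and fixes $C_n$. Applying $\zeta$ to $(xC_i)\star C_{j+1} - C_{j+1}\star(xC_i) = q^2 (xC_{i+1})\star C_j - q^{-2} C_j \star (xC_{i+1})$ sends each $\star$-product to the reversed product of the $\zeta$-transformed factors, leaves the scalars $q^2, q^{-2}$ untouched, and delivers precisely the second identity $C_{j+1}\star(C_iy) - (C_iy)\star C_{j+1} = q^2 C_j\star(C_{i+1}y) - q^{-2}(C_{i+1}y)\star C_j$. (One could instead apply $\natural$ directly to the second relation of Lemma \ref{lem:altxx}, but the $\zeta$ argument is shorter and reuses the computation already done.)
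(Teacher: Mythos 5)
Your proposal is correct and matches the paper's own proof, which simply applies $\natural$ to the two relations of Lemma \ref{lem:altxx} and substitutes the images from Theorem \ref{thm:mainres}; your prefactor bookkeeping (common factor $(q-q^{-1})^{2i+2j+1}$, residual powers $-q^{-2i-2j-2}$, $-q^{-2i-2j}$, $q^{-2i-2j-4}$, then scaling by $-q^{2i+2j+2}$) checks out exactly. Deriving the second identity by applying the antiautomorphism $\zeta$ instead of repeating the computation is a minor, legitimate shortcut fully in the spirit of the paper's use of $\zeta$ elsewhere.
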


\begin{corollary}
The following relations hold in $\mathbb V$.
For $i \in \mathbb N$,
\begin{align*}
&
q (x C_{i+1})\star (x C_i) = q^{-1} (x C_i)\star (x C_{i+1}),
\\
&
q (C_i y)\star (C_{i+1}y) = q^{-1} (C_{i+1}y)\star (C_i y).
\end{align*}
For distinct $i,j\in \mathbb N$,
\begin{align*}
&
q (x C_{i+1})\star (x C_j) - q^{-1} (x C_j)\star (x C_{i+1}) =
q^{-1} (x C_{i})\star (x C_{j+1}) - q (x C_{j+1})\star (x C_i),
\\
&
q (C_j y)\star (C_{i+1}y) - q^{-1} (C_{i+1}y)\star (C_j y) =
q^{-1} (C_{j+1} y)\star ( C_i y) - q (C_{i} y)\star ( C_{j+1} y).
\end{align*}
\end{corollary}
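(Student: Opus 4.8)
The plan is to obtain this corollary by applying the algebra homomorphism $\natural$ to the four relations of Lemma~\ref{lem:dam1} and then rewriting the images by means of Theorem~\ref{thm:mainres}. Since $\natural : U^+_q \to \mathbb V$ is an algebra homomorphism into the $q$-shuffle algebra, it sends each product $E_a E_b$ to the $\star$-product $\natural(E_a)\star\natural(E_b)$, keeping the order of the two factors. Thus every monomial appearing in Lemma~\ref{lem:dam1} is converted into a $\star$-monomial in the Catalan elements, carrying a scalar prefactor supplied by Theorem~\ref{thm:mainres}.

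First I would dispose of the two relations (\ref{eq:spcase1}), (\ref{eq:spcase2}). Applying $\natural$ to (\ref{eq:spcase1}) and using $E_{n\delta+\alpha_0}\mapsto q^{-2n}(q-q^{-1})^{2n}xC_n$, the left-hand side becomes $q\,q^{-4i-2}(q-q^{-1})^{4i+2}\,(xC_{i+1})\star(xC_i)$ and the right-hand side becomes $q^{-1}q^{-4i-2}(q-q^{-1})^{4i+2}\,(xC_i)\star(xC_{i+1})$; here the two products $E_{(i+1)\delta+\alpha_0}E_{i\delta+\alpha_0}$ and $E_{i\delta+\alpha_0}E_{(i+1)\delta+\alpha_0}$ share the common $\delta$-index sum $(i+1)+i$, so they acquire the \emph{same} scalar $q^{-4i-2}(q-q^{-1})^{4i+2}$. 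Cancelling this nonzero scalar gives the first displayed equation. Relation (\ref{eq:spcase2}) is treated identically via $E_{n\delta+\alpha_1}\mapsto q^{-2n}(q-q^{-1})^{2n}C_ny$, yielding the second displayed equation.

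The two relations for distinct $i,j$ run the same way. In each of them every product $E_aE_b$ that occurs satisfies $a+b=i+j+1$, so under $\natural$ every term is multiplied by the one common scalar $q^{-2(i+j+1)}(q-q^{-1})^{2(i+j+1)}$. After substituting the images from Theorem~\ref{thm:mainres}, I would factor out this scalar from both sides; the coefficients $q^{\pm1}$ of Lemma~\ref{lem:dam1} are scalars, hence pass through the $\mathbb F$-linear map $\natural$ untouched, and dividing through by the common nonzero scalar produces exactly the two stated identities in $\mathbb V$. The computation is pure bookkeeping and carries no real obstacle; the only point deserving care is the verification that within each relation the total $\delta$-index is constant across all terms, since it is precisely this invariance that makes the scalar prefactor common and lets the coefficients $q,q^{-1}$ reappear verbatim.
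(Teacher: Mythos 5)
Your proposal is correct and follows exactly the paper's route: the paper derives this corollary by applying $\natural$ to Lemma~\ref{lem:dam1} and invoking Theorem~\ref{thm:mainres}, which is precisely what you do. Your added observation that every term in each relation has constant total $\delta$-index $i+j+1$, so that all terms pick up the same nonzero scalar $q^{-2(i+j+1)}(q-q^{-1})^{2(i+j+1)}$ which then cancels, is the correct bookkeeping behind the paper's word ``routinely.''
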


\bigskip

\noindent Paul Terwilliger \hfil\break
\noindent Department of Mathematics \hfil\break
\noindent University of Wisconsin \hfil\break
\noindent 480 Lincoln Drive \hfil\break
\noindent Madison, WI 53706-1388 USA \hfil\break
\noindent email: {\tt terwilli@math.wisc.edu }\hfil\break


\begin{thebibliography}{10}

\bibitem{bona}
M.~B{\'o}na.
\newblock{\em 
A walk through combinatorics. 
An introduction to enumeration and graph theory. Fourth edition}.
World Scientific Publishing Co. Pte. Ltd., Hackensack, NJ, 2017. 



\bibitem{bruIntro}
R.~A.~Brualdi.
\newblock{\em
Introductory combinatorics. Fifth edition}.
 Pearson Prentice Hall, Upper Saddle River, NJ, 2010.


\bibitem{damiani}
I.~Damiani.
\newblock
A basis of type Poincare-Birkoff-Witt for the quantum
algebra of $\widehat{\mathfrak{sl}}_2$.
\newblock {\em
J. Algebra} 161 (1993) 291--310.


\bibitem{green}
J.~A.~Green.
\newblock Shuffle algebras, Lie algebras and quantum groups.
\newblock
{\em Textos de Matemática. Série B [Texts in Mathematics. Series B]},
9. Universidade de Coimbra,
Departamento de Matemática, Coimbra, 1995. vi+29 pp.

\bibitem{hongkang}
J.~Hong,~S.~Kang.
\newblock
{\em Introduction to quantum groups and crystal bases}.
\newblock
Graduate Studies in Mathematics, 42.
American Mathematical Society,
Providence, RI, 2002.

\bibitem{leclerc}
B.~Leclerc.
\newblock
Dual canonical bases, quantum shuffles and $q$-characters.
\newblock{\em Math. Z.}
246 (2004)  691--732;
{\tt arXiv:math/0209133}.


\bibitem{lusztig}
G.~Lusztig.
\newblock
{\em
Introduction to quantum groups}.
\newblock
Progress in Mathematics, 110. Birkhauser, Boston, 1993.

\bibitem{rosso1}
M.~Rosso.
\newblock
Groupes quantiques et alg{\`e}bres de battage quantiques.
\newblock{\em
C.~R. Acad. Sci. Paris} 320 (1995) 145--148.

\bibitem{rosso}
M.~Rosso.
\newblock
Quantum groups and quantum shuffles.
\newblock{\em
Invent. Math} 133 (1998) 399--416.

\bibitem{stanleyvol2}
R.~P.~Stanley.
\newblock{\em
Enumerative combinatorics. Vol. 2.}.
Cambridge Studies in Advanced Mathematics, 62. 
Cambridge University Press, Cambridge, 1999.

\bibitem{stanley}
R.~P.~Stanley.
\newblock{\em
Catalan numbers}.
Cambridge University Press, New York, NY, 2015.



\bibitem{vanlint}
J.~H.~Van Lint, R.~M.~Wilson.
\newblock{\em
A course in combinatorics}.
Cambridge University Press, Cambridge, 1992.




 \end{thebibliography}
\end{document}